\def \hom {\operatorname{hom}}
\newcommand{\tind}{t_{\textup{ind}}}
\newcommand{\ww}{\mathbf{w}}
\newcommand{\RR}{\mathbb{R}}
\newcommand{\ZZ}{\mathbb{Z}}
\newcommand{\NN}{\mathbb{N}}
\newcommand{\bN}{\mathbf{N}}
\newtheorem{theorem}{Theorem}[section]
\newtheorem{problem}[theorem]{Problem}
\newtheorem*{theorem*}{Theorem}
\newtheorem*{problem*}{Problem}
\newtheorem{lemma}[theorem]{Lemma}
\newtheorem{corollary}[theorem]{Corollary}
\newtheorem{proposition}[theorem]{Proposition}
\theoremstyle{definition}
\newtheorem{definition}[theorem]{Definition}
\newtheorem{claim}[theorem]{Claim}
\theoremstyle{remark}
\newcommand{\upfour}[0]{
\begin{tikzpicture}[baseline={([yshift=-.5ex]current bounding box.center)}]
\node[fill=black, circle, inner sep=1pt, minimum size=0.1cm] (1) {};
\node[fill=black, circle, inner sep=1pt, minimum size=0.1cm] (2) [above = 0.3cm of 1] {};
\node[fill=black, circle, inner sep=1pt, minimum size=0.1cm] (3) [right = 0.3cm of 2] {};
\node[fill=black, circle, inner sep=1pt, minimum size=0.1cm] (4) [below = 0.3cm of 3] {};
\node[fill=black, circle, inner sep=1pt, minimum size=0.1cm] (5) [right = 0.3cm of 4] {};
\draw (1)--(2)--(3)--(4)--(5);
\end{tikzpicture}}
\newcommand{\uHtwo}[0]{
\begin{tikzpicture}[baseline={([yshift=-.5ex]current bounding box.center)}]
\node[fill=black, circle, inner sep=1pt, minimum size=0.1cm] (1) {};
\node[fill=black, circle, inner sep=1pt, minimum size=0.1cm] (2) [below left = 0.3cm of 1] {};
\node[fill=black, circle, inner sep=1pt, minimum size=0.1cm] (3) [below right =0.3cm of 1] {};
\draw (3)--(1)--(2);
\end{tikzpicture}}
\newcommand{\utriangleedge}[0]{
\begin{tikzpicture}[baseline={([yshift=-.5ex]current bounding box.center)}]
\node[fill=black, circle, inner sep=1pt, minimum size=0.1cm] (1) {};
\node[fill=black, circle, inner sep=1pt, minimum size=0.1cm] (2) [below left = 0.3cm of 1] {};
\node[fill=black, circle, inner sep=1pt, minimum size=0.1cm] (3) [below right = 0.3cm of 1] {};
\node[fill=black, circle, inner sep=1pt, minimum size=0.1cm] (4) [above = 0.3cm of 1] {};
\draw (1)--(3)--(2)--(1)--(4);
\end{tikzpicture}}
\newcommand{\uvedge}[0]{
\begin{tikzpicture}[baseline={([yshift=-.5ex]current bounding box.center)}]
\node[fill=black, circle, inner sep=1pt, minimum size=0.1cm] (1) {};
\node[fill=black, circle, inner sep=1pt, minimum size=0.1cm] (2) [below = 0.3cm of 1] {};
\node[draw=none, fill=none, circle, inner sep=1pt, minimum size=0.1cm] (4) [left = 0.05cm of 1] {};
\draw (2)--(1);
\end{tikzpicture}}
\newcommand{\uKfour}[0]{
\begin{tikzpicture}[baseline={([yshift=-.5ex]current bounding box.center)}]
\node[fill=black, circle, inner sep=1pt, minimum size=0.1cm] (1) {};
\node[fill=black, circle, inner sep=1pt, minimum size=0.1cm] (2) [right = 0.3cm of 1] {};
\node[fill=black, circle, inner sep=1pt, minimum size=0.1cm] (3) [below = 0.3cm of 2] {};
\node[fill=black, circle, inner sep=1pt, minimum size=0.1cm] (4) [below = 0.3cm of 1] {};
\draw (4)--(2)--(1)--(3)--(4)--(1)--(2)--(3);
\end{tikzpicture}}
\newcommand{\ucfour}[0]{
\begin{tikzpicture}[baseline={([yshift=-.5ex]current bounding box.center)}]
\node[fill=black, circle, inner sep=1pt, minimum size=0.1cm] (1) {};
\node[fill=black, circle, inner sep=1pt, minimum size=0.1cm] (2) [above = 0.3cm of 1] {};
\node[fill=black, circle, inner sep=1pt, minimum size=0.1cm] (3) [right = 0.3cm of 2] {};
\node[fill=black, circle, inner sep=1pt, minimum size=0.1cm] (4) [below = 0.3cm of 3] {};
\draw (1)--(2)--(3)--(4)--(1);
\end{tikzpicture}}
\title{Undecidability of polynomial inequalities in weighted graph homomorphism densities}
\thanks{Grigoriy Blekherman was partially supported by NSF grant DMS-1901950. Annie Raymond was partially supported by NSF grant DMS-2054404. Fan Wei was partially supported by NSF grant DMS-1953958.}
\author{Grigoriy Blekherman}
\address{School of Mathematics, Georgia Institute of Technology,
686 Cherry Street,
Atlanta, GA 30332}\email{greg@math.gatech.edu}
\author{Annie Raymond}
\address{Department of Mathematics and Statistics,
Lederle Graduate Research Tower, 1623D,
University of Massachusetts Amherst,
710 N. Pleasant Street,
Amherst, MA 01003} \email{raymond@math.umass.edu}
\author{Fan Wei}
\address{Department of Mathematics,
Fine Hall, Washington Road,
Princeton, NJ 08544}\email{fanw@princeton.edu}
\begin{document}
\subjclass[2020]{05C25, 05C35, 12L05}
\keywords{Graph homomorphism density, quantum graph, decidability, graph limits}
\maketitle

\begin{abstract}
Many problems and conjectures in extremal combinatorics concern polynomial inequalities between homomorphism densities of graphs where we allow edges to have real weights. Using the theory of graph limits, we can equivalently evaluate polynomial expressions in homomorphism densities on \emph{kernels} $W$, i.e., symmetric, bounded, and measurable functions $W$ from $[0,1]^2 \to \mathbb{R}$.
In 2011, Hatami and Norin proved a fundamental result that it is undecidable to determine the validity of polynomial inequalities in homomorphism densities for graphons (i.e., the case where the range of $W$ is $[0,1]$, which corresponds to unweighted graphs, or equivalently, to graphs with edge weights between $0$ and $1$). The corresponding problem for more general sets of kernels, e.g., for all kernels or for kernels with range $[-1,1]$, remains open.  For any $a > 0$, we show undecidability of polynomial inequalities for any set of kernels which contains all kernels with range $\{0,a\}$.
This result also answers a question raised by Lov\'asz about finding computationally effective certificates for the validity of homomorphism density inequalities in kernels.
\end{abstract}

%-------------------------------------Introduction------------------------------------------------
\section{Introduction}

A graph $G$ has vertex set $V(G)$ and edge set $E(G)$. All graphs are assumed to be simple, without loops or multiple edges. Given two simple graphs $G$ and $H$, let $\hom(H, G)$  denote the number of {\it homomorphisms} from $H$ to $G$, which is the set of maps from $V(H)$ to $V(G)$ that send edges of $H$ to edges of $G$. Furthermore, let $t(H,G):=\frac{\hom(H,G)}{|V(G)|^{|V(H)|}}$ denote the \emph{homomorphism density of $H$ in $G$}, i.e., the probability that a random map from $V(H)$ to $V(G)$ is a homomorphism.  One can extend the usual definition of graph homomorphisms to include target graphs $G$ with edge weights $\ww: E(G) \to \mathbb{R}$, denoted as $G_\ww$:
$$\hom(H,G_{\ww}) := \sum_{\substack{\varphi: V(H)\rightarrow V(G_{\ww}):\\ \varphi \textup{ is a homomorphism}}} \prod_{\{i,j\}\in E(H)} w_{\varphi(i), \varphi(j)}.$$ 
We can define $t(H, G_{\ww}) := \frac{\hom(H, G_{\ww})}{|V(G_{\ww})|^{|V(H)|}}$ analogously. 
If the edge weights $\ww$ of $G_\ww$ only take values in $\{0,1\}$, we recover the usual definitions of homomorphism numbers and densities. 

One of the central topics in extremal combinatorics is the study of
algebraic inequalities between homomorphism densities. 
For example, the famous Sidorenko conjecture \cite{Sidorenko} states that $t(H,G)-t(K_2,G)^{|E(H)|}\geq 0$ for any bipartite graph $H$ and any graph $G$. 
Since $t(H_i,G_{\ww})\cdot t(H_j,G_{\ww})=t(H_iH_j,G_{\ww})$ where $H_iH_j$ is the disjoint union of $H_i, H_j$, any polynomial inequality can be seen as a linear inequality.  Formally, define a {\it quantum graph} $f$  to be a finite formal linear combination of graphs $\sum_{1 \leq i \leq k} c_i H_i$ where $k \in \mathbb{N}_+$, $c_i \in \mathbb{R}$, and $H_i$'s are finite graphs \cite{FLS}. Many extremal combinatorics questions can be reformulated as asking whether \[t(f, G_{\ww}) := \sum c_i t(H_i, G_{\ww}) \geq 0\]  is valid for all graphs $G_{\ww}$ (for certain classes of edge weights $\ww$).

It is often useful to study the nonnegativity of quantum graphs using language and tools from {\it graph limits}, a deep theory recently developed by Lov\'asz et al. (see e.g., \cite{Lovaszbook, GL1, gl2}). Given a bounded measurable  function $W: [0,1]^2 \to \mathbb{R}$, one can define the homomorphism density of $H$ in $W$ as
$$t(H,W) := \int_{[0,1]^{|V(H)|}} \prod_{\{i,j\}\in E(H)} W(x_i, x_j)d x_1 \dots d x_{|V(H)|}.$$ 
There are three natural classes of graph limits \cite{Lovaszbook}. 
The first one is $\mathcal{W}$, the set of {\it kernels}, defined as the set of bounded symmetric measurable functions $W:[0,1]^2 \to \mathbb{R}$. The second  one is $\mathcal{W}_0$, the set of {\it graphons}, i.e., kernels where $W(x,y) \in [0,1]$ for all $x,y \in [0,1]^2$.
 The last one is $\mathcal{W}_1$, defined as the set of kernels $W$ with $\|W\|_\infty \leq 1$. More generally, we define $\mathcal{W}_a$ for some $a > 0$ as the set of kernels $W$ with $\|W\|_\infty \leq a$. Note that this notation is consistent with $\mathcal{W}_1$, but not with $\mathcal{W}_0$.
 
There are many  classical extremal combinatorics questions which deal not only with $\mathcal{W}_0$, but also with other kernels allowing negative values in the range. For example, it is known that $ t(\uKfour, W) + 3 t(\utriangleedge,W) + \frac{3}{4} t(\ucfour, W)+\frac{3}{4}t(\uHtwo,W) + \frac{3}{16} t(\uvedge\uvedge,W) \geq 0$ does not hold in general for kernels $W \in \mathcal{W}_{1/2}$ whereas $4 t(\upfour,W)+t(\uHtwo,W)+t(\uvedge\uvedge,W)\geq 0$ does \cite{K4,cycle}.  In general, the theory of Ramsey multiplicity or localness of graph inequalities can be rephrased as certifying the nonnegativity of polynomial inequalities for kernels $W$ in some class whose range includes negative values 
(e.g., \cite{openquestion, Lovaszlocal, hcommon, localSid, kcommon, Lovaszbook}).  
As a consequence of the results in \cite{GL1},  an answer to the following question raised by Lov\'asz would essentially give a complete solution to extremal combinatorics questions on graph homomorphism densities. 
\begin{problem}[Problem 20, Lov\'asz \cite{openquestion}]\label{p1}
Which  quantum graphs $f$ satisfy $t(f, W) \geq 0$ for every $W \in \mathcal{W}$ (or for every
$W \in \mathcal{W}_0$)?
\end{problem}

There have been many developments in solving some instances of Problem \ref{p1} (e.g., \cite{FLS,FA, LS}). In particular, many known results follow from writing the quantum graph $f$ as a sum of squares or, equivalently, through a sequence of cleverly applied  Cauchy-Schwarz inequalities \cite{FA, FLS, LS, Lovaszbook}.  An important question is the computational difficulty of deciding whether  a given polynomial inequality is valid.  
In  Problem 17 of  \cite{openquestion}, Lov\'asz asked, ``Does every algebraic inequality between the subgraph densities $t(F,W)$ that
holds for all $W \in \mathcal{W}_0$ follow from a finite number of semidefiniteness inequalities?" Hatami and Norin showed that the answer to this question is negative in their breakthrough paper \cite{HN11}. In fact, their main result in \cite{HN11} is that  the problem of determining the validity of a polynomial inequality between homomorphism densities for $W \in \mathcal{W}_0$ is undecidable. This shows that there is no unified algorithm or effective certificate to determine the positivity of $t(f, W)$ for any $W \in \mathcal{W}_0$, and negatively answers the following question raised by Lov\'{a}sz for the special case $W \in \mathcal{W}_0$.

\begin{problem}[Problem 21, Lov\'asz \cite{openquestion}]\label{p2}
Is it true that for every quantum graph
$f$ where $t(f, W) \geq 0$ for all $W \in \mathcal{W}$, there exist quantum graphs $g$ and $h$, each expressible as a sum of squares  of labeled quantum
graphs, so that $f + gf = h$?
\end{problem}

The undecidability result and the corresponding problem of effective certification of nonnegativity were left open for general weights. We show that the problem of determining whether a given graph density inequality is valid is undecidable for any set of kernels which contains all kernels with range $\{0,a\}$ for some non-zero $a\in \RR$. In particular, it is undecidable for each of the three natural classes of graph limits, $\mathcal{W}_0, \mathcal{W}$, and $\mathcal{W}_a$, thus generalizing the work of \cite{HN11}. We give a single unified proof for any kernel class $\mathcal{W}^*$ that contains all kernels with range $\{0,1\}$. 
The case of kernels containing all kernels with range $\{0,a\}$ follows quickly by rescaling the kernels by the factor of $\frac{1}{a}$, which also rescales the corresponding polynomial inequalities. We also note that by compactness \cite{GL1}, nonnegativity of a polynomial inequality on all kernels $W$ with range $\{0,1\}$ is equivalent to nonnegativity on all kernels $W\in \mathcal{W}_0$.

\begin{theorem}\label{thm:main}
For any set of kernels $\mathcal{W}^*$ which contains all kernels with range $\{0,1\}$ as a subset, the following problem is undecidable:
\begin{itemize}
    \item \textsc{Instance:} A positive integer $k$, finite graphs $H_1, \dots, H_k$, and real numbers $c_1, \dots, c_k$.
    \item \textsc{Question:} Does the inequality $\sum_{i=1}^k c_i t(H_i, W) \geq 0$ hold for all $W\in \mathcal{W}^*$?  
\end{itemize}
In particular, it holds when $\mathcal{W}^*$ is $\mathcal{W}$, $\mathcal{W}_0$ or $\mathcal{W}_a$, for $a>0$.
\end{theorem}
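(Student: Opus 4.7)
The plan is to prove Theorem~\ref{thm:main} by reducing Hilbert's tenth problem (Matiyasevich's theorem on the undecidability of Diophantine solvability) to the density-inequality validity problem. Given any polynomial $p\in \ZZ[x_1,\dots,x_m]$, I would construct a quantum graph $f_p$ such that
\[ t(f_p, W) \geq 0 \text{ for all } W \in \mathcal{W}^* \iff p(x_1,\dots,x_m)\neq 0 \text{ for every } (x_1,\dots,x_m)\in \NN^m.\]
Since the right-hand problem is undecidable, this transfers undecidability to the density inequality. The strategy mirrors the Hatami--Norin proof~\cite{HN11} for $\mathcal{W}_0$, but each step is engineered so that the same $f_p$ works uniformly for every $\mathcal{W}^*$ containing the $\{0,1\}$-valued kernels. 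The version for kernels with range $\{0,a\}$ then follows by the rescaling $W\mapsto aW$, which scales each $t(H_i, W)$ by $a^{|E(H_i)|}$ and hence scales the inequality by a known positive factor.

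I would begin by introducing a family of $\{0,1\}$-valued \emph{template kernels} $W_{n_1,\dots,n_m}$ parameterized by $(n_1,\dots,n_m)\in\NN^m$: step kernels with a fixed block-diagonal pattern whose block sizes are proportional to $n_1,\dots,n_m$ (together with an auxiliary normalizing block that fixes the scale). Densities of simple graphs in $W_{n_1,\dots,n_m}$ are explicit polynomials in the block sizes -- complete graphs $K_r$ give power sums $\sum_i n_i^r$, complete bipartite graphs give products $n_i n_j$, and various gluings yield enough arithmetic to realize any Diophantine polynomial $p(n_1,\dots,n_m)$ (after clearing denominators) as $t(q_p,W_{n_1,\dots,n_m})$ for a computable quantum graph $q_p$ depending only on $p$.

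Next I would construct a \emph{template-forcing} quantum graph $\Psi$ that is nonnegative on all of $\mathcal{W}$ (hence on $\mathcal{W}^*$) and whose zero set singles out, up to measure-preserving relabeling of $[0,1]$, precisely the template kernels. The graph $\Psi$ would be assembled from sums of squares of labeled quantum graphs in the sense of \cite{FLS,Lovaszbook}; such sum-of-squares identities are valid on the widest class $\mathcal{W}$, not merely on graphons. After homogenizing both $\Psi$ and $q_p$ in total edge count, so that rescaling $W$ does not affect signs, one sets $f_p = C\cdot \Psi - q_p^2$ for a sufficiently large integer $C$. On a template kernel $W_{n_1,\dots,n_m}$ one has $t(\Psi, W)=0$, so $t(f_p, W) = -p(n_1,\dots,n_m)^2$, which is nonnegative iff $p(n_1,\dots,n_m)=0$; on non-template kernels $t(\Psi, W)>0$, and by a compactness argument (restricting to unit-norm kernels via scale invariance) a single constant $C$ makes the $\Psi$-term dominate $t(q_p^2,W)$ uniformly.

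The principal obstacle is the construction of $\Psi$ valid on the larger class $\mathcal{W}^*$. For graphons one may exploit the explicit bounds $0\le W\le 1$ to force $\{0,1\}$-valuedness and block structure via simple gadgets; on $\mathcal{W}^*$ these bounds disappear, and every constraint must be expressed purely through sum-of-squares identities in simple-graph densities that remain valid for arbitrary real-valued kernels. Designing $\Psi$ to simultaneously pin down $\{0,1\}$-valuedness of $W$ and the prescribed block-diagonal template structure using only SOS expressions -- and then proving the uniform dominance which allows a single $C$ to work with an explicit bound depending on the coefficients of $p$ -- is the technical heart of the argument. Once this is achieved, the reduction is complete and yields the claimed undecidability for every $\mathcal{W}^*$ containing the $\{0,1\}$-valued kernels.
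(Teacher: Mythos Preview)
Your proposal has a genuine gap at precisely the point you yourself flag as the ``principal obstacle'': the construction of the template-forcing quantum graph $\Psi$. You give no construction, no candidate, and no argument that such a $\Psi$ exists; you merely state what properties it should have. But this is not a technical detail to be filled in later --- it is the entire content of the proof. There are concrete reasons to doubt the approach. First, forcing a kernel to be $\{0,1\}$-valued by a sum-of-squares identity valid on all of $\mathcal{W}$ is not obviously possible: the standard gadgets used over $\mathcal{W}_0$ rely on the inequalities $0\le W\le 1$, which you explicitly discard. Second, even granting $\{0,1\}$-valuedness, pinning down a specific block structure whose zero locus is a \emph{countable} family of templates (indexed by all of $\NN^m$) by a single polynomial in densities is delicate: by continuity of densities, any limit of templates would also lie in the zero set of $\Psi$, and you have not argued that such limits are themselves templates. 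Third, your compactness argument for the constant $C$ is only a sketch; ``scale invariance plus unit norm'' does not by itself make the ratio $t(q_p^2,W)/t(\Psi,W)$ bounded near the zero set of $\Psi$, which is exactly where the difficulty lies.

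There is a second, independent gap in the construction of $q_p$. Densities of cliques, complete bipartite graphs, and gluings in a block-diagonal kernel yield \emph{symmetric} functions of the block sizes; extracting the individual $n_i$ separately --- so that an arbitrary (non-symmetric) polynomial $p(n_1,\dots,n_m)$ can be encoded --- is exactly the problem of obtaining $m$ \emph{independent} copies of a one-dimensional profile. You assume this away. The paper's whole Section~2.2 is devoted to solving this, via $q$-ifications of Alon's $(n,d,\lambda)$-graphs, and the paper notes in its discussion of proof challenges that the Hatami--Norin mechanism for producing independent copies breaks down over $\mathcal{W}$.

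For comparison, the paper avoids template-forcing entirely. It works with \emph{ratios} of necklace densities $(t(N_{8,q},W)/t(N_{4,q},W)^2,\; t(N_{12,q},W)/t(N_{4,q},W)^3)$ and proves directly, via the spectrum of an auxiliary kernel $M_{W,q}$, that the convex hull of this profile over all of $\mathcal{W}$ is the explicit region $\mathcal{R}=\operatorname{conv}\{(1/n,1/n^2):n\in\NN\}$. The integrality needed for Matiyasevich is thus a feature of the \emph{profile geometry}, not something enforced by a penalty term $\Psi$. Independent copies for different variables are then obtained by varying $q$ and using a single explicit construction. This sidesteps both of the obstacles above.
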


The above theorem quickly implies a negative answer to Problem \ref{p2} for any kernel class which contains all kernels with range $\{0,1\}$ as a subset. For nice choices of kernel classes, we have the following more general result on non-existence of certificates of nonnegativity.

\begin{corollary}\label{cor:rational}
There exists a quantum graph $f$ with $t(f,W)\geq 0$ for all $W \in \mathcal{W}$ (or $\mathcal{W}_1$) for which there are no quantum graphs $g,h$, each expressible as a sum of squares of labeled quantum graphs, such that $gf = h$. 
\end{corollary}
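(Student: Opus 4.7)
The plan is to deduce Corollary \ref{cor:rational} from Theorem \ref{thm:main} by a contradiction argument. Fix the kernel class to be $\mathcal{W}$ or $\mathcal{W}_1$ and suppose, for contradiction, that every rational-coefficient quantum graph $f$ valid on the chosen class admits quantum graphs $g, h$, each a sum of squares of labeled quantum graphs, with $gf = h$. I will exhibit two semi-decision procedures whose parallel execution would decide the validity problem on rational-coefficient instances, contradicting Theorem \ref{thm:main} (whose proof yields undecidability already for such instances).

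For the ``yes'' side, I would enumerate positive integers $N$ and, for each $N$, test the first-order sentence over $\mathbb{R}$ asserting the existence of coefficient vectors encoding quantum graphs $g, h$, each a sum of squares of at most $N$ labeled quantum graphs on at most $N$ vertices, such that $gf = h$ holds as an identity of quantum graphs. Using the product rule $t(H_i, \cdot)\, t(H_j, \cdot) = t(H_i \sqcup H_j, \cdot)$, the identity $gf = h$ unfolds into finitely many linear equations in the unknown coefficients, and the SOS requirement becomes positive semidefiniteness of associated Gram matrices; the combined sentence has rational data (since $f$ does) and is therefore decidable by Tarski--Seidenberg. The standing hypothesis guarantees that, for a valid $f$, a certificate exists at some finite $N$, so this procedure halts on valid inputs.

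For the ``no'' side, I would enumerate step-function kernels $W'$ with rational values (restricted to $[-1,1]$ in the $\mathcal{W}_1$ case) and compute $t(f, W')$ exactly as a finite sum of products of rationals. Continuity of $t(f, \cdot)$ in the cut-norm topology on any $L^\infty$-bounded set of kernels, together with density of rational-valued step functions, ensures that any kernel witnessing $t(f, W) < 0$ gives rise to an enumerated $W'$ with $t(f, W') < 0$, so the procedure halts on invalid inputs. Running both procedures in parallel decides validity, contradicting Theorem \ref{thm:main}.

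The principal obstacle is the ``yes'' side: asserting the existence of an SOS certificate is a non-trivial condition, and the temptation to enumerate rational SOS decompositions is dangerous because the SOS cone is not closed under rational approximation at its boundary. Invoking Tarski--Seidenberg sidesteps this entirely by testing the real-coefficient existence question directly as a decidable first-order sentence. A secondary subtlety appears for $\mathcal{W}$, where the counterexample search must be localized to bounded kernels; this is handled by first proving the corollary for $\mathcal{W}_1$ and transferring via the observation that the identity $gf = h$ is kernel-independent, or by noting that validity on $\mathcal{W}$ forces the top-degree homogeneous part of $f$ to be separately nonnegative, allowing a bound on $\|W\|_\infty$ in any potential counterexample in terms of $f$.
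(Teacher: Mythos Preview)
Your argument follows essentially the same strategy as the paper's proof: run two semi-decision procedures in parallel (one searching for an SOS-type certificate $gf=h$, the other searching for a witness of negativity), and conclude from Theorem~\ref{thm:main} that some valid $f$ must escape certification. The paper's version is terser---it simply says to enumerate sums of squares $g,h$ on one side and integer-weighted graphs $G_{\ww}$ on the other---whereas you are more careful on the ``yes'' side by invoking Tarski--Seidenberg to decide, at each complexity bound $N$, the real-coefficient existential sentence for an SOS identity; this neatly handles the worry that an SOS certificate might not have rational coefficients, a point the paper leaves implicit. On the ``no'' side your rational step-kernels and the paper's integer-weighted graphs are equivalent devices. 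Your closing remarks about localizing the counterexample search for $\mathcal{W}$ are unnecessary: every kernel is bounded by definition, so any specific witness $W$ with $t(f,W)<0$ is approximable by rational step kernels, and the enumeration over all rational step kernels (of all magnitudes) is still countable.
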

The solution of Problem \ref{p2} follows along the same lines as the proof of Corollary \ref{cor:rational}, which we will present in Subsection \ref{subsec:complete}. 

In addition, our result also implies the undecidability result for homomorphism numbers $\hom(H_i, G_{\ww})$ instead of densities $t(H_i, W)$ when the range of $\ww$ contains at least two values including $0$.  This follows from the fact that any polynomial inequality involving homomorphism densities $t(H_i, G_{\ww})$ is equivalent to a polynomial inequality involving homomorphism numbers $\hom(H_i, G_{\ww})$ by adding isolated vertices to $H_i$'s if necessary. Note that in the case when the edge weights $\ww$ has binary range $\{0,1\}$, an earlier result of Ioannidis and Ramakrishnan  \cite{IR95} showed undecidability of inequalities between homomorphism numbers $\hom(H_i, G)$. 

\subsection{Proof Idea and Challenges}

We first sketch the idea of Ioannidis and Ramakrishnan's short proof \cite{IR95} of the undecidability of inequalities between homomorphism numbers $\hom(H_i, G)$ as a motivation for the proof for homomorphism densities. As in \cite{HN11}, this proof is also deduced  from Matiyasevich's solution to Hilbert's tenth problem, stated below.
\begin{theorem*}[\cite{Mat70}]\label{thm:Mat}
Given a positive integer $k$ and a polynomial $p(x_1, \ldots, x_k)$ with integer coefficients, the problem of determining whether there exist $x_1, \ldots, x_k \in  \ZZ$ such that $p(x_1, \ldots, x_k) <0$ is undecidable.
\end{theorem*}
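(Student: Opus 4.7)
The plan is to derive the statement from the classical Matiyasevich--Robinson--Davis--Putnam (MRDP) theorem, which asserts that every recursively enumerable subset of $\mathbb{N}$ is Diophantine, and in particular implies undecidability of Hilbert's Tenth Problem (deciding whether $q \in \mathbb{Z}[x_1,\ldots,x_k]$ has an integer zero). Given MRDP, the reduction to the stated ``$p<0$'' formulation is immediate: over $\mathbb{Z}$ one has $q(\mathbf{x}) = 0$ if and only if $q(\mathbf{x})^2 - 1 < 0$, since the latter forces $q(\mathbf{x})^2 < 1$. So it suffices to prove MRDP and then invoke the existence of an r.e.\ but undecidable set such as the halting set $\{e : \varphi_e(0)\!\downarrow\}$; MRDP then gives a polynomial $p_e$ whose integer solvability is undecidable uniformly in $e$.

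I would split the MRDP argument into the two historical pieces. The first, due to Davis, Putnam, and Robinson, is that every r.e.\ set is \emph{exponential} Diophantine, i.e.\ representable as
\[
\{n \in \mathbb{N} : \exists \mathbf{y} \in \mathbb{N}^m,\ P(n,\mathbf{y}) = 0\}
\]
where $P$ is built from polynomials together with the exponential function $(a,b) \mapsto a^b$. The plan is to arithmetize a Turing machine $M$ on input $n$: using G\"odel's $\beta$-function one encodes the entire finite sequence of tape configurations of $M$ into a fixed tuple of integers, and uses exponentials such as $2^t$ as explicit length bounds so that the bounded quantifiers ``for every step, the next configuration follows from the previous one by $M$'s transition table'' can be flattened into existential ones. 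This step is intricate bookkeeping, but it uses no deep number theory.

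The main obstacle is the deeper half: Matiyasevich's theorem that the graph $\{(a,b,c) : c = a^b\}$ is itself Diophantine, allowing every exponential in the DPR representation to be replaced by polynomial constraints. My plan here is the Pell-equation route. For fixed $a \geq 2$ the positive integer solutions $(X_n(a), Y_n(a))$ of $X^2 - (a^2-1)Y^2 = 1$ form a sequence satisfying a second-order linear recurrence, with $Y_n(a)$ growing roughly like $(2a)^n$. I would establish a package of Diophantine (i.e.\ polynomial) identities and congruences among these solutions -- strict monotonicity of $Y_n$ in $n$, the divisibility rule $Y_n \mid Y_m \Leftrightarrow n \mid m$, the congruence $Y_n(a) \equiv n \pmod{a-1}$, and a ``step-down'' congruence controlling $Y_m$ modulo $Y_n^2$ -- and bundle them into a single polynomial system whose solutions parametrize exactly the map $(a,n) \mapsto Y_n(a)$. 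From this ``Pell function'' a short further argument, matching the exponential rate of $Y_n(a)$ against the relation $c = a^b$, yields a Diophantine definition of exponentiation. Substituting into the DPR representation of the halting set produces an ordinary Diophantine predicate whose solvability is undecidable, and passing through the $q^2-1<0$ trick finishes the proof. The delicate core, and the real hard step, is choosing the Pell congruences so that no spurious non-Pell tuples accidentally satisfy the system; historically this is precisely the point at which Matiyasevich's use of the exponential growth of linear recurrences completed Julia Robinson's program.
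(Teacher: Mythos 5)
The paper does not prove this theorem; it is invoked as a known result, cited directly to Matiyasevich's 1970 paper (with the Davis--Putnam--Robinson half implicit). There is therefore no in-paper argument to compare yours against, and you should be aware you are reproving a deep black-box result that the authors simply quote.

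That said, your sketch is a faithful outline of the classical route. The reduction $q(\mathbf{x})=0 \iff q(\mathbf{x})^2-1<0$ over $\mathbb{Z}$ is correct and cleanly transfers undecidability of the ``has an integer zero'' formulation of Hilbert's Tenth Problem to the ``$p<0$'' formulation stated here. Your two-stage decomposition --- DPR (r.e.\ implies exponential Diophantine, via arithmetizing a Turing computation with G\"odel's $\beta$-function and flattening bounded quantifiers) followed by Matiyasevich (the Pell sequence $Y_n(a)$, its divisibility and congruence laws, and its exponential growth rate pinning down $c=a^b$) --- is exactly the historical proof, and you correctly flag the genuinely hard point, namely ruling out spurious solutions of the Pell congruence package. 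Two caveats. First, MRDP is usually proved over $\mathbb{N}$; since the statement here is over $\mathbb{Z}$, you should explicitly note the standard passage between the two (substituting sums of four squares, or $x=u-v$ with $u,v\in\mathbb{N}$), which you currently leave implicit. Second, what you have written is a roadmap rather than a proof: neither the Turing-machine arithmetization nor the Pell system is actually constructed and verified. That is acceptable given the paper itself treats the theorem as a citation, but the ``delicate core'' you name remains a genuine open step in your writeup.
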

By changing $x_i$ to $-x_i$ when necessary, it is therefore also undecidable to determine whether a polynomial with integer coefficients is always nonnegative for $x_i$'s taking values in $\mathbb{N}$. Thus it suffices to show that for any polynomial with integer coefficients $p(x_1, \dots, x_k)$, there is a quantum graph $f$ such that $p(x_1, \dots, x_k) \geq 0$ for all $x_i \in \mathbb{N}$ if and only if $\hom(f, G) \geq 0$ for all $G$. Let $H_1, \dots,H_k$ be finite connected graphs with no homomorphisms from one to another and such that each $H_i$ has no non-trivial homomorphism to itself. It is not hard to show that such graphs exist. Since $\hom(H_i, G) \hom(H_j, G) = \hom(H_i H_j, G)$, there is a quantum graph $f$ such that for any graph $G$, 
\[p(\hom(H_1, G), \dots, \hom(H_k, G)) = \hom(f, G).\] Crucially, since $\hom(H_i, G) \in \mathbb{N}$, we have that $p\geq 0$ for any $x_1, \dots, x_k \in \mathbb{N}$ implies that $\hom(f, G) \geq 0$ for all $G$. On the other hand, for each $k$-tuple of values $a_1, \dots, a_k \in \mathbb{N}$,   there is a graph $G$ such that $t(H_i, G) = a_i$, for example by letting $G$ be the disjoint union of $a_i$ copies of $H_i$.

One challenge in generalizing this simple proof to show the undecidability of homomorphism density inequalities is that $t(H_i, G)$ is not necessarily an integer. 
In \cite{HN11}, Hatami and Norin 
used a result of Bollob\'{a}s \cite{Bol} that the convex hull of the set of all possible pairs of edge-triangle densities, i.e., pairs $(t(K_2, W), t(K_3, W))$ for $W \in \mathcal{W}_0$, is the convex hull of points $(1,1)$ and $\left(\frac{n-1}{n}, \frac{(n-2) (n-1)}{n^2}\right)$ for $n\in \mathbb{N}$. These extreme points thus provide the needed integer points. This \emph{integrality feature} alone does not lead to undecidability, since nonnegativity of univariate polynomials on integers is a decidable problem. 

Given a polynomial $p(x_1, \dots, x_k)$ in $k$ variables, starting from a particular base graph $F$, Hatami and Norin use a delicate and intricate construction of a quantum graph $f$ based on  different variations of blow-ups of $F$. As in the proof of the undecidability of homomorphism number inequalities, 
one needs $k$ ``independent" copies of the convex hull to ``plug in" $x_1, \dots, x_k$. Hatami and Norin achieve this by measuring some conditional graph densities, conditioned on the set of graph homomorphisms $\phi: V(F) \to V(G)$.  They construct the quantum graph $f$ so that for any graph $G$, \[t(f,G)=\sum_\phi c_\phi p^*(x_1(\phi), y_1(\phi),  \dots, x_k(\phi), y_k(\phi))\] where $c_\phi$'s are constants, $p^*$ is a polynomial whose nonnegativity is closely related to that of $p$, and $x_i(\phi)$ and $y_i(\phi)$'s are closely related to the density of $K_2$ and $K_3$ in some subgraphs $G_i(\phi)$ of $G$ depending on  $\phi$. When $p \geq 0$ for integer-valued variables, they show that $p^*\geq 0$ by the integrality feature of the aforementioned convex hull and the fact that each individual $\phi$ enables $k$ copies of this convex hull. A crucial fact is that since the weights $\ww$ of $G_{\ww}$ are nonnegative (in fact, they are in $\{0,1\}$), the constants $c_\phi$ are always nonnegative. These two facts  imply $t(f,G) \geq 0$ for any $G$.

There are several difficulties in extending this approach to more arbitrary weights $\ww$ in $G_{\ww}$, or equivalently to more general classes of kernels. First, much less is known about possible tuples of graph densities with $W \in \mathcal{W}$, and even less if we consider more general classes of kernels. There are certainly fewer valid inequalities when negative weights are allowed. For instance it is still an open question to characterize all graphs $G$ that such $t(G,W)\geq 0$ for all $W\in \mathcal{W}$ \cite{openquestion}. If we try to record again all pairs of edge-triangle densities $(t(K_2, W), t(K_3, W))$ but for $W \in \mathcal{W}$ instead of $\mathcal{W}_0$, then any point in $\mathbb{R}^2$ is achievable. Second, the construction that Hatami and Norin used heavily relies on $c_\phi$ being nonnegative, which is not the case in general for $W \in \mathcal{W}$. Lastly, the process to obtain multiple copies of the convex hull with the integrality feature relies on individual $\phi$'s, and cannot be used in our setting by the previous argument. 

Our proof strategy is to show that the convex hull of ratios of densities of some carefully chosen graphs has the integrality feature even when $W \in \mathcal{W}$. We then directly realize multiple copies of the convex hull by using an explicit family of graphs instead of going through a sum depending on $\phi$. Remarkably, the convex hull is the same regardless of the weights we use, either $\mathcal{W}_0$, $\mathcal{W}_1$ or $\mathcal{W}$. 
Another advantage of our proof technique is that we associate each polynomial to an explicit quantum graph.

\subsubsection{Main Results in Detail}
We use $\mathfrak{W}$ to denote the collection of sets of kernels that contain all kernels with range $\{0,1\}$ as a subset. 
We introduce the components of our quantum graphs. A {\it necklace} $N_{c,q}$ is a cycle of length $c$ where each edge is replaced by a clique of size $q$. (See Figure \ref{fig:N54P33} as an example, and Definitions \ref{def:q-ifi} and \ref{def:nc} for details). We study the convex hull of the profile recording the pairs of ratios of densities
$$ \left(\frac{t(N_{8,q},W)}{t(N_{4,q}, W)^2}, \frac{t(N_{12,q}, W)}{t(N_{4,q}, W)^3} \right)$$
as $W$ varies over $\mathcal{W}^*$ for some $\mathcal{W}^*\in\mathfrak{W}$, for which $t(N_{4,q},W)\neq 0$. We call the set of all possible pairs for all values of $q$ between $2$ and $\ell$, \emph{the $\ell$-necklace profile} and denote it as $\mathcal{D}_{\leq \ell}$ (see Definition \ref{def:D<l}).   Points in the $\ell$-necklace profile lie in $\mathbb{R}^{2l-2}$ since we are recording $l-1$ different pairs.  

Let $\mathcal{R} \subset \mathbb{R}^2$ be the convex hull of $(0,0)$ with the points $\left(\frac{1}{n},\frac{1}{n^2}\right)$ for $n \in \mathbb{N}$ (see Figure \ref{fig:regionR}).  We note that $\mathcal{R}$ is a compact convex set. Our first main ingredient is that the convex hull of the $\ell$-necklace profile $\mathcal{D}_{\leq \ell}$ is simply the direct product of $\ell-1$ copies of $\mathcal{R}$ inside $\mathbb{R}^{2\ell-2}$ for any $\ell \geq 2$ and any kernel class $\mathcal{W}^*\in \mathfrak{W}$.

\begin{theorem}\label{thm:profile}
Let $T\subset \mathbb{R}^{2\ell-2}$ be the convex hull of the $\ell$-necklace profile $\mathcal{D}_{\leq \ell}$ as $W$ varies over all kernels in $\mathcal{W}^*$, $\mathcal{W}^*\in \mathfrak{W}$. We have that
$$T= \mathcal{R}^{\ell-1}.$$
\end{theorem}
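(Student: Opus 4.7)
The plan is to prove the equality $T = \mathcal{R}^{\ell-1}$ via two inclusions; both reduce, through a spectral representation of necklace densities, to statements about probability distributions. For the forward direction, I introduce the rooted-clique kernel
\[
K_q(x,y) \;:=\; \int_{[0,1]^{q-2}} \prod_{1 \leq s < t \leq q} W(z_s, z_t)\, dz_3 \cdots dz_q, \qquad z_1 = x,\; z_2 = y.
\]
Since $K_q$ is symmetric, bounded, and real, the associated integral operator $T_{K_q}$ on $L^2([0,1])$ is self-adjoint and Hilbert--Schmidt with real eigenvalues $\mu_1, \mu_2, \ldots$. The cyclic structure of the necklace (a $c$-cycle with each edge thickened into a $K_q$) yields
\[
t(N_{c,q}, W) \;=\; \operatorname{tr}(T_{K_q}^c) \;=\; \sum_i \mu_i^c.
\]
Specializing $c \in \{4, 8, 12\}$ and setting $a_i := \mu_i^4 \geq 0$, the ratio pair equals $(M_2(p^{(q)}), M_3(p^{(q)}))$ for the probability distribution $p^{(q)}_i := a_i/\sum_j a_j$ (provided $t(N_{4,q},W) \neq 0$), where $M_k(p) := \sum_i p_i^k$. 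Hence $T \subseteq \mathcal{R}^{\ell-1}$ reduces to the analytic claim that $(M_2(p), M_3(p)) \in \mathcal{R}$ for every probability distribution $p$.

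To establish this claim I verify $\mathcal{R}$'s defining half-space inequalities. The upper bound $M_3 \leq M_2$ follows from $p_i \in [0,1]$. The lower boundary of $\mathcal{R}$ consists of segments between consecutive extreme points $(1/n, 1/n^2)$ and $(1/(n+1), 1/(n+1)^2)$, with the $n$-th boundary inequality rewriting as
\[
\sum_i p_i \,(n p_i - 1)\bigl((n+1)\, p_i - 1\bigr) \;\geq\; 0.
\]
This is non-trivial because individual summands are negative precisely when $p_i \in (1/(n+1), 1/n)$. My plan is a Lagrange-multiplier reduction: the first-order conditions for minimizing the left-hand side over the simplex are quadratic in each $p_i$, so every minimizer is supported on at most two positive values, and a direct polynomial calculation then settles the two-value case (with equality attained along the defining segments of $\mathcal{R}$). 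This is one of the two main technical obstacles of the proof.

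For the reverse inclusion $\mathcal{R}^{\ell-1} \subseteq T$, it suffices to approximate every extreme point of $\mathcal{R}^{\ell-1}$---namely every tuple $\bigl((1/n_q, 1/n_q^2)\bigr)_{q=2}^\ell$ with $n_q \in \mathbb{N} \cup \{\infty\}$ (where $n_q = \infty$ corresponds to the extreme point $(0,0)$)---by points of the profile. Diagonal extreme points (all $n_q = n$) are realized by the $\{0,1\}$-kernel $W_n$ that is $1$ on $n$ equal diagonal blocks of side $1/n$ and $0$ elsewhere: a direct calculation shows $T_{K_q^{W_n}}$ has exactly $n$ nonzero eigenvalues, all equal to $(1/n)^{q-1}$, giving the ratio pair $(1/n, 1/n^2)$ for every $q$ simultaneously. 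The other main obstacle is achieving non-diagonal extreme points; my plan is to use disjoint-union $\{0,1\}$-kernels with non-uniform block sizes $m_1, \ldots, m_N$: the spectrum of $T_{K_q^W}$ is then exactly $\{m_i^{q-1}\}_{i=1}^N$, so $p^{(q)}_i \propto m_i^{4(q-1)}$ varies with $q$ through the exponent, producing a rich family of profile points whose closed convex hull (upon taking limits in $N$ and parameterizing the block sizes, for instance along geometric progressions) I expect to contain every extreme point of $\mathcal{R}^{\ell-1}$. Verifying this last step in full generality is the main remaining technical hurdle.
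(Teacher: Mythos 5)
Your forward inclusion (via the kernel $M_{W,q}$, its trace identity $t(N_{c,q},W)=\sum_i\lambda_i^c$, the substitution $a_i=\lambda_i^4$, and the reduction to the moment pair $(M_2(p),M_3(p))$ of a probability distribution) is essentially the paper's argument. Your Lagrange-multiplier reduction to at-most-two distinct values is a viable variant of the paper's local-adjustment step, which instead shows all positive entries are equal; your route requires an extra case check but is fine in principle.

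The reverse inclusion, however, has a genuine gap, and you flag the right spot as the ``main remaining technical hurdle'' --- it is not merely technical. With a disjoint union of diagonal blocks of measures $\alpha_1,\ldots,\alpha_N$, the nonzero spectrum of $M_{W,q}$ is $\{\alpha_i^{q-1}\}_i$, so the distribution driving the $q$-th coordinate is $p^{(q)}_i\propto \alpha_i^{4(q-1)}$; crucially these are \emph{all} power-laws in the same sequence $(\alpha_i)$. Now $(M_2(p),M_3(p))$ is close to the extreme point $(1/r,1/r^2)$ only when $p$ is weakly close to uniform on $r$ atoms (this is the equality case of Cauchy--Schwarz in $M_3\geq M_2^2$ combined with $M_2\to 1/r$). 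If $p^{(2)}$ is close to uniform on $r_2$ atoms, then $\alpha_1^4,\ldots,\alpha_{r_2}^4$ are all within a factor $1+o(1)$ of each other and dominate the tail; but then $\alpha_1^{4(q-1)},\ldots,\alpha_{r_2}^{4(q-1)}$ are also within a factor $(1+o(1))^{q-1}=1+o(1)$ of each other and still dominate, so $p^{(q)}$ is also close to uniform on $r_2$ atoms. Hence every non-diagonal extreme point of $\mathcal{R}^{\ell-1}$ is unreachable by this family, and taking convex combinations or limits in $N$ (geometric progressions included) cannot repair this, since $\mathcal{D}_{\leq\ell}$ must actually contain points near each extreme point of $\mathcal{R}^{\ell-1}$, not merely have them in its convex hull via blending of different $W$'s.

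The missing idea is a construction where different $q$'s see genuinely independent spectral data. The paper achieves this by taking the $s$-ification $A(k_s,s)$ of a triangle-free $(n,d,\lambda)$-expander of Alon: because $A(k_s,2)$ has no triangle, $A(k_s,s)$ has no $K_q$ for $q>s$, so $M_{A(k_s,s),q}\equiv 0$ whenever $s<q$. Taking $W$ to be a disjoint union of $r_s$ copies of $A(k_s,s)$ with widely separated scales $k_s=(\ell-s+1)K$ then makes the $q$-th profile coordinate dominated solely by the $r_q$ equal top eigenvalues of the copies of $A(k_q,q)$, yielding $(1/r_q,1/r_q^2)$ in the limit, independently for each $q$. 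This ``vanishing below threshold'' phenomenon has no analogue for disjoint unions of cliques, where every block contributes at every $q$.
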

We will prove this result in Claim \ref{claim:convex} and Proposition \ref{prop:specialgraph} where the latter is the harder result. 
We observe that the set $\mathcal{R}$ is an affine-linear transformation of the convex hull of $(K_2,K_3)$ densities in $\mathcal{W}_0$ proved by Bollob\'{a}s \cite{Bol}, and therefore it has the same integrality feature. We can then follow the same reduction as Hatami and Norin to obtain our main theorem, Theorem \ref{thm:main}. In particular, we will prove that
deciding whether a polynomial expression in densities of necklaces $N_{c,q}$ is nonnegative for $W\in \mathcal{W}^*$ for any $\mathcal{W^*}\in \mathfrak{W}$ is undecidable. In particular, the result holds for $\mathcal{W}$, $\mathcal{W}_0$ and $\mathcal{W}_a$.

%---------------------------------------------------------------------------

\section{Proof of Theorem \ref{thm:main}}

\subsection{Properties of graph profiles involving necklaces}\label{sec:profile}
The main result in this section is Theorem \ref{thm:profile}, which is the key to prove our main theorem (Theorem~\ref{thm:main}).
We first formalize the operation of replacing edges by cliques that was mentioned in the introduction in relation to necklaces. 

\begin{definition}\label{def:q-ifi}
Given some graph $G$ and some integer $q\geq 2$, the \emph{$q$-ification of $G$} is the graph obtained as follows: for every edge of $G$, add $q-2$ vertices that are all pairwise adjacent and that are all adjacent to the two vertices of the selected edge. In other words, each edge of $G$ gets replaced by a clique of size $q$. 
\end{definition}
Figure \ref{fig:N54P33} is a  $4$-ification of $G = C_5$. 
Necklaces can thus be defined through the $q$-ification of cycles. 

\begin{definition}\label{def:nc}
Let $c \geq 3, q \geq 2$ be positive integers. Let $N_{c,q}$ be the $q$-ification of the cycle of length $c$. We call $N_{c,q}$ the \emph{$q$-necklace} of length $c$.
\end{definition}

\begin{figure}[h!]
\centering
\includegraphics[scale=0.3]{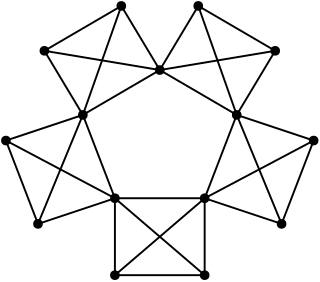} 
\caption{A $4$-necklace of length 5: $N_{5,4}$}\label{fig:N54P33}
\end{figure}

As it will soon become clear, to understand the density $t(N_{c,q}, W)$, we need to work with an auxiliary function corresponding to $W$. Before defining this auxiliary function, we first define conditional density. A \emph{rooted graph} is a graph with one or more vertices distinguished as roots. Suppose $H^{\bullet \dots \bullet}$ is a rooted graph with $k$ roots where $v_1, \dots, v_k$ are the roots and $v_{k+1}, \dots, v_{|V(H)|}$ are the remaining vertices of $H$. Fix $x_1, \dots, x_k \in [0,1]$, and define the conditional density
$$t_{x_1,\dots, x_k}(H^{\bullet \dots \bullet}, W) = \int_{[0,1]^{|V(H)|-k}} \prod_{\{i,j\}\in E(H)} W(x_i, x_j)d x_{k+1} \dots dx_{|V(H)|}.$$ 
In other words, this is the density of $H$ in $W$ where vertex $i$ is mapped to $x_i$ for each $1 \leq i \leq k$. 

\begin{definition}\label{def:MWq}
Given $W \in \mathcal{W}$, consider a symmetric measurable function $M_{W,q}: [0,1]^2 \to \RR$ such that $M_{W,q}(x,y) = t_{x,y}(K_q^{\bullet \bullet}, W)$ where $K_q^{\bullet \bullet}$ is the complete graph on $q$ vertices with two roots. 
\end{definition}

\begin{definition}\label{def:MWq2}
Given a graph $G$, we  define $M_{G, q} := M_{W_G, q}$ where $W_G$ is the graphon associated with $G$ defined as follows. Partition $[0,1]$ into $|V(G)|$ intervals $I_1, \dots, I_{|V(G)|}$ of equal length. Let $W_G(x,y)=1$ if and only if $x \in I_i$ and $y \in I_j$ and $(i,j) \in E(G)$; and $W_G(x,y)=0$ otherwise.
\end{definition} It can be easily seen that $t(H, G) = t(H, W_G)$. 
Trivially, by Definition \ref{def:MWq}, we have for any graph $G$ and any $W' \in \mathcal{W}$ that
\begin{equation}
    M_{G, 2} = W_G, \ \textup{and} \ M_{W',2} = W'. \label{eq:MG2}
\end{equation}

Since $M_{W, q}$ is bounded and symmetric, its spectrum is countable, with real eigenvalues $|\lambda_1| \geq |\lambda_2| \geq \dots$. In particular, $\sum_i |\lambda_i|^s<\infty$ for any positive integer $s$ (see e.g., \cite{Lovaszbook}).

\begin{lemma}\label{cor:relationhomeigenvalues}
Let $W \in \mathcal{W}$. Then $t(N_{c,q},W)=\sum_{i} \lambda_i^c$ where $\lambda_1, \lambda_2, \ldots$ are the eigenvalues of $M_{W,q}$. 
\end{lemma}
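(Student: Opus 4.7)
The plan is to reduce the computation to the well-known spectral formula for cycle densities in symmetric kernels. First I would rewrite $t(N_{c,q},W)$ so that the $c-2$ extra clique vertices on each cycle edge are integrated out, leaving an integral over the $c$ cycle vertices only. Concretely, label the vertices of the underlying $c$-cycle by $1,\dots,c$ (indices mod $c$), and for each cycle edge $\{i,i+1\}$ denote by $z^{(i)}_1,\dots,z^{(i)}_{q-2}$ the $q-2$ additional vertices of the $K_q$-block sitting on that edge. Since the $q$-blocks on different cycle edges share no internal vertices, Fubini lets me write
\[
t(N_{c,q},W)=\int_{[0,1]^c}\!\!\prod_{i=1}^c \Bigl(\int_{[0,1]^{q-2}}\!\!\prod_{\text{edges in }i\text{-th }K_q} W(\cdot,\cdot)\, dz^{(i)}_1\cdots dz^{(i)}_{q-2}\Bigr)\,dx_1\cdots dx_c.
\]
By Definition \ref{def:MWq}, the inner integral with the roots fixed at $x_i,x_{i+1}$ is exactly $M_{W,q}(x_i,x_{i+1})$. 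Hence
\[
t(N_{c,q},W)=\int_{[0,1]^c}\prod_{i=1}^c M_{W,q}(x_i,x_{i+1})\,dx_1\cdots dx_c = t(C_c,M_{W,q}),
\]
treating $M_{W,q}$ as a kernel.

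Next I would invoke the spectral theory of symmetric Hilbert--Schmidt kernels. Since $W$ is bounded, $M_{W,q}$ is a bounded symmetric measurable function on $[0,1]^2$, so the integral operator $T_{W,q}f(x)=\int_0^1 M_{W,q}(x,y)f(y)\,dy$ is a compact self-adjoint operator on $L^2[0,1]$. By the spectral theorem, there is an orthonormal basis of eigenfunctions $\{\phi_i\}$ with real eigenvalues $\lambda_i$ satisfying $|\lambda_1|\geq|\lambda_2|\geq\cdots$ and $\sum_i\lambda_i^2<\infty$; this is the same numbering used in the statement. One then has the Mercer-type expansion $M_{W,q}(x,y)=\sum_i \lambda_i\phi_i(x)\phi_i(y)$ in $L^2([0,1]^2)$.

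The final step is to compute $t(C_c, M_{W,q})$ as the trace of the $c$-fold iterate $T_{W,q}^c$. Writing out the cyclic integral using the eigenfunction expansion and using orthonormality (as in Lemma 7.11 of Lov{\'a}sz's book) gives
\[
t(C_c,M_{W,q})=\operatorname{tr}(T_{W,q}^c)=\sum_i \lambda_i^c.
\]
The only thing to be careful about is convergence: for $c\geq 2$ the sum $\sum_i \lambda_i^c$ converges absolutely because $\sum_i \lambda_i^2<\infty$ and the $\lambda_i$ are bounded, and the $L^2$-convergence of the expansion of $M_{W,q}$ is enough to interchange summation and integration. I do not expect a serious obstacle here, as the argument is essentially the standard kernel-trace computation; the only genuine content is the first step of recognizing that $q$-ification of a cycle produces exactly a cycle density against the kernel $M_{W,q}$.
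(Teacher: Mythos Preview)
Your proposal is correct and follows the same approach as the paper: first observe that $t(N_{c,q},W)=t(C_c,M_{W,q})$ by integrating out the internal clique vertices, then apply the standard spectral formula for cycle densities of a symmetric kernel. The paper's proof states exactly these two steps (citing Lov\'asz's book for the latter) but omits the details you have supplied.
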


\begin{proof}
Since $M_{W,q}$ is again in $\mathcal{W}$, the result follows as  $t(N_{c,q}, W) = t(C_c, M_{W,q}) = \sum_i \lambda_i^n$. The last equality follows from a standard argument through the spectral decomposition of $M_{W,q}$ (e.g., see \cite{Lovaszbook}). 
\end{proof}

We now investigate the profiles of ratios of necklaces. 

\begin{definition}
For an integer $q \geq 2$, let $$\mathcal{D}_q:=\textup{cl}\left(\left \{ \left(\frac{t(N_{8,q},W)}{t(N_{4,q},W)^2}, \frac{t(N_{12,q},W)}{t(N_{4,q},W)^3}\right) \mid W \in \mathcal{W} \textup{ and }t(N_{4,q},W)\neq 0 \right\}\right).$$ 
For a particular $W$, we let $x_q(W):=\frac{t(N_{8,q},W)}{t(N_{4,q},W)^2}$ and $y_q(W):=\frac{t(N_{12,q},W)}{t(N_{4,q},W)^3}$. 
\end{definition}

We first note the following simple lemma where we obtain a bound on the boundary curve of $\mathcal{D}_q$.  This lower bound will serve the same purpose as the Goodman lower bound did in \cite{HN11}. 

\begin{lemma}\label{lem:Dq}
For any $W \in \mathcal{W}$, $0\leq x_q(W), y_q(W) \leq 1$. Equivalently, $\mathcal{D}_q\subseteq [0,1]^2$.  In addition, for any $(x,y)\in \mathcal{D}_q$, we have that $y\geq x^2$.
\end{lemma}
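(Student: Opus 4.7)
The plan is to leverage Lemma~\ref{cor:relationhomeigenvalues} to express each of the three necklace densities as a power sum of the eigenvalues $\lambda_1,\lambda_2,\ldots$ of $M_{W,q}$, at which point every claim in the lemma reduces to an elementary inequality about nonnegative reals.

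Concretely, I would set $a_i := \lambda_i^4 \ge 0$, and write
\[
t(N_{4,q},W)=\sum_i a_i,\qquad t(N_{8,q},W)=\sum_i a_i^{2},\qquad t(N_{12,q},W)=\sum_i a_i^{3}.
\]
Nonnegativity of all three numerators, together with $t(N_{4,q},W)\neq 0$, gives $x_q(W),y_q(W)\ge 0$ immediately. For the upper bounds $x_q(W),y_q(W)\le 1$, the key observation is that for any finite (or absolutely convergent) collection of nonnegatives $a_i$,
\[
\Bigl(\sum_i a_i\Bigr)^{2}\ \ge\ \sum_i a_i^{2},\qquad \Bigl(\sum_i a_i\Bigr)^{3}\ \ge\ \sum_i a_i^{3},
\]
because the expanded left-hand sides consist of the right-hand side plus additional nonnegative cross terms. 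Dividing by $t(N_{4,q},W)^2$ and $t(N_{4,q},W)^3$ respectively yields $x_q(W)\le 1$ and $y_q(W)\le 1$.

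For the parabolic bound $y\ge x^{2}$, rewriting it as $t(N_{12,q},W)\,t(N_{4,q},W)\ \ge\ t(N_{8,q},W)^{2}$ makes it an application of Cauchy--Schwarz: setting $u_i=a_i^{3/2}$ and $v_i=a_i^{1/2}$ (legitimate because $a_i\ge 0$), one has $\sum_i u_i v_i=\sum_i a_i^{2}$ and
\[
\Bigl(\sum_i a_i^{2}\Bigr)^{2}=\Bigl(\sum_i u_i v_i\Bigr)^{2}\ \le\ \Bigl(\sum_i u_i^{2}\Bigr)\Bigl(\sum_i v_i^{2}\Bigr)=\Bigl(\sum_i a_i^{3}\Bigr)\Bigl(\sum_i a_i\Bigr).
\]
The convergence of these series is guaranteed by the summability statement $\sum_i |\lambda_i|^{s}<\infty$ recorded just before Lemma~\ref{cor:relationhomeigenvalues}. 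Finally, since all three inequalities $x\ge 0$, $x\le 1$, $y\ge 0$, $y\le 1$, $y\ge x^{2}$ are closed conditions on $\mathbb{R}^{2}$, they persist under the closure appearing in the definition of $\mathcal{D}_q$, which gives the stated containment $\mathcal{D}_q\subseteq[0,1]^{2}$ together with $y\ge x^{2}$.

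There is no real obstacle here: once the power-sum interpretation is in place the entire statement is a packaging of two expansions of a product and one use of Cauchy--Schwarz. The only point worth double-checking during the writeup is that the eigenvalue series really are absolutely convergent in the exponents we use (which is $s\in\{4,8,12\}$, all covered by the general bound for kernels), so that the manipulations above are fully justified.
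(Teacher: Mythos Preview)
Your proposal is correct and follows essentially the same route as the paper: both reduce everything to power sums of the eigenvalues via Lemma~\ref{cor:relationhomeigenvalues}, then invoke elementary inequalities. The only cosmetic differences are that the paper names Cauchy--Schwarz for the upper bound $x_q(W)\le 1$ (where you use the expansion argument) and H\"older for the parabolic bound $y\ge x^2$ (where you use Cauchy--Schwarz with $u_i=a_i^{3/2}$, $v_i=a_i^{1/2}$, which is the same inequality); your explicit remark about closure is a nice addition the paper leaves implicit.
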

\begin{proof}
By Lemma~\ref{cor:relationhomeigenvalues}, we know that $x_q(W)=\frac{\sum_{i} \lambda_i^8}{\left(\sum_{i} \lambda_i^4\right)^2}$. Thus $x_q(W) \leq 1$  by the Cauchy-Schwarz inequality. 
The fact that $y_q(W) \in [0,1]$ follows from the same argument. 

We are left to show that $y_q(W) \geq x_q(W)^2$ holds for every $W \in \mathcal{W}$. By Lemma~\ref{cor:relationhomeigenvalues}, it suffices to show that $$\frac{\sum_{i} \lambda_i^{12}}{\left(\sum_{i} \lambda_i^4\right)^3}\geq \left(\frac{\sum_{i} \lambda_i^8}{\left(\sum_{i} \lambda_i^4\right)^2}\right)^2.$$ This is equivalent to showing that $\sum_{i} \lambda_i^4 \sum_{i} \lambda_i^{12} \geq  (\sum_{i} \lambda_i^8)^2$ which follows from H\"older's inequality. 
\end{proof}

We now want to show that a sparse but infinite set of points on $y=x^2$ is realizable for every $\mathcal{D}_q$. The following lemma and theorem build up to a result that serves the purpose of Bollob\'as' result used in \cite{HN11}.

\begin{lemma}\label{lem:firstrealization}
Let $q\geq 2$ be an integer. The points $(\frac{1}{r}, \frac{1}{r^2})$ are in $\mathcal{D}_q$ for every positive integer $r$.
\end{lemma}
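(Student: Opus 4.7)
The plan is to exhibit, for each positive integer $r$, an explicit kernel $W$ such that the spectrum of $M_{W,q}$ consists of exactly $r$ equal nonzero eigenvalues (plus zeros), since such a spectrum immediately gives $x_q(W)=1/r$ and $y_q(W)=1/r^2$. Concretely, by Lemma~\ref{cor:relationhomeigenvalues}, if $M_{W,q}$ has $r$ nonzero eigenvalues all equal to some $\lambda\neq 0$, then $t(N_{c,q},W)=r\lambda^c$ for every $c$, so the numerator $r^2$ and denominator factors cancel cleanly in the ratios defining $\mathcal{D}_q$, leaving $(1/r,1/r^2)$.

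To produce such a kernel, my first instinct is to take the ``block-diagonal'' kernel $W$ on $[0,1]^2$: partition $[0,1]$ into $r$ intervals $I_1,\dots,I_r$ of equal length $1/r$, and set $W(x,y)=1$ if $x,y$ both lie in the same $I_i$, and $W(x,y)=0$ otherwise. This is symmetric, measurable, and has range $\{0,1\}$, so it belongs to $\mathcal{W}$ (and to every class $\mathcal{W}^*\in\mathfrak{W}$).

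The key computation is then to determine $M_{W,q}$ and its spectrum. If $x$ and $y$ lie in different blocks, the factor $W(x,y)$ (an edge of $K_q^{\bullet\bullet}$) already forces the integrand to vanish, so $M_{W,q}(x,y)=0$. If $x,y\in I_i$, the integrand equals $1$ precisely when all $q-2$ remaining integration variables lie in $I_i$, contributing a factor of $(1/r)^{q-2}$. Thus $M_{W,q}$ has exactly the same block-diagonal support as $W$ but with constant value $r^{-(q-2)}$ on each block. Acting on $L^2[0,1]$, the indicator functions $\mathbf{1}_{I_i}$ are eigenvectors with eigenvalue $r^{-(q-2)}\cdot (1/r)=r^{-(q-1)}$, and these span the image, so the spectrum is exactly $r$ copies of $r^{-(q-1)}$.

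Plugging into Lemma~\ref{cor:relationhomeigenvalues} gives $t(N_{c,q},W)=r\cdot r^{-c(q-1)}=r^{1-c(q-1)}$, which is nonzero, and the claimed equalities $x_q(W)=1/r$ and $y_q(W)=1/r^2$ follow by direct arithmetic. There is no real obstacle here: the construction is explicit and the spectral analysis is straightforward because $W$ has only $r$ nonzero ``blocks'' of rank one. The only thing to be careful about is confirming that $M_{W,q}$ genuinely vanishes between blocks (which uses that the edge $\{1,2\}$ of $K_q^{\bullet\bullet}$ is present, so $W(x,y)$ appears as a factor in the integrand) and that the eigenvectors $\mathbf{1}_{I_i}$ exhaust the nonzero spectrum, which is immediate since the integral operator has rank at most $r$.
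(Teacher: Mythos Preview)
Your proof is correct and follows essentially the same approach as the paper: both construct the block-diagonal graphon corresponding to the disjoint union of $r$ equal-sized cliques, compute that $M_{W,q}$ equals $r^{-(q-2)}$ on each diagonal block and zero elsewhere, and read off the $r$ equal nonzero eigenvalues $r^{-(q-1)}$ to obtain $x_q(W)=1/r$ and $y_q(W)=1/r^2$ via Lemma~\ref{cor:relationhomeigenvalues}. Your additional remarks on why $M_{W,q}$ vanishes off the blocks and why the rank is exactly $r$ are accurate elaborations of the same argument.
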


\begin{proof}
Let $I_1, \dots, I_r$ be $r$ intervals of equal length that partition $[0,1]$. Let $W$ be a graphon where $W(x,y)=1$ if and only if both $x, y \in I_i$ for some $1 \leq i \leq r$. Equivalently, one can consider $W$ as the limit object of graphs consisting of the disjoint union of $r$ cliques of equal size. By Definition \ref{def:MWq}, $M_{W,q}(x,y)=0$ if $x,y$ are not in the same $I_i$ for any $1 \leq i \leq r$, and  $M_{W,q}(x,y) = \frac{1}{r^{q-2}}$  if and only if both $x,y\in I_i$ for some $1 \leq i \leq r$. Each block of $M_{W,q}$ (restricted to $I_i \times I_i$) has eigenvalues $\frac{1}{r^{q-2}}$ with multiplicity one and the remaining eigenvalues are zero. Thus the eigenvalues of $M_{W,q}$ are $\frac{1}{r^{q-1}}$  with multiplicity $r$ and zero for the remaining ones. Thus by Lemma~\ref{cor:relationhomeigenvalues}, $x_q(W) = \frac{1}{r}$ and $y_q(W) = \frac{1}{r^2}$.
\end{proof}

\begin{lemma}\label{lem:convex}
Let $q\geq 2$ be an integer. For every $W \in \mathcal{W}$ and every positive integer $r$, if $\frac{t(N_{8,q},W)}{t(N_{4,q}, W)^2} \in \left[\frac{1}{r+1}, \frac{1}{r}\right]$, then we have $$\frac{t(N_{12,q}, W)}{t(N_{4,q},W)^3}\geq L\left(\frac{t(N_{8,q},W)}{t(N_{4,q}, W)^2}\right)$$ where $$L(x):=\frac{2r+1}{r(r+1)}x-\frac{1}{r(r+1)}.$$ 
\end{lemma}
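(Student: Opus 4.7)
The plan is to reduce the lemma, via Lemma \ref{cor:relationhomeigenvalues}, to a spectral inequality and then to an extremal problem whose resolution hinges on the integrality of spectral multiplicities. Letting $(\lambda_i)$ denote the eigenvalues of $M_{W,q}$ and writing $\mu_i := \lambda_i^4 \geq 0$, I have
\[
x_q(W) = \frac{\sum_i \mu_i^2}{\bigl(\sum_i \mu_i\bigr)^2}, \qquad y_q(W) = \frac{\sum_i \mu_i^3}{\bigl(\sum_i \mu_i\bigr)^3}.
\]
Since these ratios are invariant under rescaling $W$, I may assume $\sum_i \mu_i = 1$, reducing the lemma to: for $\mu_i \geq 0$ with $\sum_i \mu_i = 1$ whose positive values each occur with positive integer multiplicity (inherited from the spectral multiplicities of the self-adjoint operator $M_{W,q}$), and $x := \sum_i \mu_i^2 \in [1/(r+1), 1/r]$, show $y := \sum_i \mu_i^3 \geq L(x)$.

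Using $\sum_i \mu_i = 1$ to absorb the constant term of $L(x)$, direct expansion yields the key algebraic identity
\[
y - L(x) = \sum_i \mu_i \bigl(\mu_i - \tfrac{1}{r}\bigr)\bigl(\mu_i - \tfrac{1}{r+1}\bigr).
\]
Since the summand is nonnegative for $\mu_i \notin (1/(r+1), 1/r)$ and nonpositive inside that open interval, no pointwise argument works; the inequality must leverage both $\sum_i \mu_i = 1$ and the integer-multiplicity constraint together.

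My approach is to invoke a Lagrange-multiplier / compactness argument to reduce to configurations where the $\mu_i$ take at most two positive values $a,b$ with positive integer multiplicities $s,t$ satisfying $sa+tb=1$. The decisive observation is that one cannot have \emph{both} $a,b$ in the open interval $(1/(r+1),1/r)$: in that case the strict inequalities give $(s+t)/(r+1) < sa+tb < (s+t)/r$, and combined with $sa+tb=1$ this forces $s+t\in(r,r+1)$, impossible for positive integers. Hence at least one of $a,b$ lies outside $(1/(r+1),1/r)$, making its contribution to the identity above nonnegative.

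The main obstacle is verifying that, in the remaining ``mixed'' case (one of $a,b$ inside the open interval and the other outside), the nonnegative contribution still dominates. This is done by explicit parameterization in each admissible pair $(s,t)$. For example, in the family $(s,t)=(r,1)$, setting $u := 1-ra \in [0,1/(r+1)]$ leads, after a short calculation, to the factored expression
\[
y - L(x) = \frac{(r-1)\, u\, \bigl(1-(r+1)u\bigr)^2}{r^2(r+1)} \geq 0,
\]
and analogous factorizations handle the other admissible integer pairs. A potentially cleaner unified formulation is the variance inequality $\operatorname{Var}_p(\nu) \geq (1/r - x)(x - 1/(r+1))$ under the probability measure $p_j := n_j \nu_j$ on the distinct positive atoms $\nu_j$ of multiplicities $n_j$, in which form the integrality $n_j \geq 1$ enters precisely as the constraint $p_j \geq \nu_j$ and yields the required lower bound on the variance.
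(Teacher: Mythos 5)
Your route to the identity
\[
y - L(x) = \sum_i \mu_i\bigl(\mu_i - \tfrac{1}{r}\bigr)\bigl(\mu_i - \tfrac{1}{r+1}\bigr)
\]
(after normalizing $\sum\mu_i=1$) is a nice observation, and your observation that no two-atom configuration $sa+tb=1$ with \emph{both} $a,b$ in the open interval $(1/(r+1),1/r)$ can exist (since it would force $s+t\in(r,r+1)$) is exactly the integrality feature that the paper also exploits. However, the proof as written has a genuine gap in the ``mixed'' case. You verify only the family $(s,t)=(r,1)$ and assert ``analogous factorizations handle the other admissible integer pairs,'' but the admissible pairs are not few and do not all reduce to that calculation. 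For example with $r=2$, the Lagrange-multiplier critical configuration with $(s,t)=(4,1)$ gives $a=4/27<1/3$ outside the bad interval and $b=11/27\in(1/3,1/2)$ inside it --- a mixed pair not captured by your $(r,1)$-parameterization --- and one must check nonnegativity anew. More generally, once one of $a,b$ is permitted to lie below $1/(r+1)$, the count of valid $(s,t)$ grows with $r$, and you give no uniform argument. The closing ``variance'' reformulation $\operatorname{Var}_p(\nu)\geq(1/r-x)(x-1/(r+1))$ with the integrality constraint $p_j\geq\nu_j$ is appealing and correctly stated (it is exactly equivalent to the claim), but you assert rather than prove that this constraint ``yields the required lower bound,'' so it does not close the gap. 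A secondary point: the phrase ``integer multiplicities inherited from the spectral multiplicities'' is a red herring --- multiplicities of values in a sequence are trivially integers, and nothing from the self-adjointness of $M_{W,q}$ is being used beyond Lemma~\ref{cor:relationhomeigenvalues}.

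For comparison, the paper proves the same statement by a different and case-free route: it passes to elementary symmetric polynomials $e_2,e_3$ and applies a smoothing argument (group two coordinates $x_i,x_j$, expand $f=x_ix_jf_1+(x_i+x_j)f_2+f_3$, and either merge them to increase $\ell^2$-norm or equalize them), concluding that the extreme configurations have all nonzero entries equal to $1/m$. That argument pins the extreme points to $(1/m,1/m^2)$ directly and avoids enumerating two-atom families entirely; Newton's identities then translate back to power sums. Your identity-based approach is an interesting alternative and could likely be completed --- for instance by carrying out the Lagrange critical-point analysis in full (the critical $(a,b)$ satisfy $a+b=\tfrac{2(2r+1)}{3r(r+1)}$ and are determined by $(s,t)$, and a uniform estimate in $(s,t)$ seems within reach) or by actually proving the variance bound under $p_j\geq\nu_j$ --- but as it stands the mixed-case verification is incomplete.
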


\begin{proof}
Let $\mathbf{X}$ be the space of $\mathbf{x} = (x_1, x_2, \dots)$ where $\mathbf{x} \geq \mathbf{0}$, $x_1 \geq x_2 \geq \ldots$, and $\mathbf{x} \in \ell^s$ for any positive integer $s$.  Let $e_j(\mathbf{x}):=\sum_{1 \leq i_1 < \ldots < i_j } x_{i_1}\cdots x_{i_j}$ be the $j$th elementary symmetric polynomial, and let $p_j(\mathbf{x}):=\sum_{i} x_i^j$ be the $j$th power sum. Note that when $\mathbf{x} \in \mathbf{X}$, $e_j(\mathbf{x}), p_j(\mathbf{x})$'s are well-defined. To prove the result, we first look at the convex hull of $e_2(\mathbf{x})$ and $e_3(\mathbf{x})$ when $e_1(\mathbf{x})=1$ before moving to the convex hull of the profile of $p_2(\mathbf{x})$ and $p_3(\mathbf{x})$ when $p_1(\mathbf{x})=1$ and then transforming this appropriately to get the desired result.

\begin{claim}For any $c_2, c_3\in \RR$, we have that $c_2e_2(\mathbf{x}) + c_3 e_3(\mathbf{x}) \geq 0$ for every $\mathbf{x}\in  \mathbf{X}$  and $e_1(\mathbf{x})=1$ if and only if $c_2e_2(\mathbf{x}) + c_3 e_3(\mathbf{x}) \geq 0$  for $x_1=\ldots = x_m = \frac{1}{m}$ and $x_{m+1}=x_{m+2}=\ldots=0$ for every integer $m\geq 1$.
\end{claim}
\begin{proof}
The forward direction is trivial since $x_1=\ldots = x_m = \frac{1}{m}$ and $x_{m+1}=x_{m+2}=\ldots=0$ satisfies $\mathbf{x} \geq \mathbf{0}$ and $e_1(\mathbf{x})=1$.

For the converse, suppose $c_2e_2(\mathbf{x}) + c_3 e_3(\mathbf{x}) \geq 0$  for $x_1=\ldots = x_m = \frac{1}{m}$ and $x_{m+1}=x_{m+2}=\ldots=0$ for every $m\in \NN$. Let $f(\mathbf{x}):=c_2e_2(\mathbf{x}) + c_3 e_3(\mathbf{x})$. It suffices to show that $$\min_{\substack{\mathbf{x} \in \mathbf{X},\\ e_1(\mathbf{x})=1}} f(\mathbf{x}) \geq 0.$$ 
Since $\{\mathbf{x}\in \mathbf{X} : e_1(\mathbf{x})=1\}$ is compact, we can assume that the minimum is attained on some $\mathbf{x} \in \mathbf{X}$, and among all optimal solutions, we pick one of largest $\ell^2$ norm. 

We prove the claim by local adjustment of this optimal solution $\mathbf{x}$ when fixing $x_i+x_j$ for some given $i < j$ where $x_i, x_j > 0$.
By the definition of $f$, there are functions $f_1, f_2, f_3$ that only depend on $\mathbf{x}^{i,j}$, i.e., $\mathbf{x}$ where we delete the entries $x_i, x_j$  so that
$$f(\mathbf{x})=x_ix_jf_1(\mathbf{x}^{i,j})+(x_i+x_j)f_2(\mathbf{x}^{i,j} )+f_3(\mathbf{x}^{i,j}).$$

If $f_1(\mathbf{x}^{i,j})\geq 0$, then by turning $x_i$ into $x_i + x_j$ and $x_j$ into zero, the value of $f$ does not decrease while the $\ell^2$ norm of the solution strictly increases, which is a contradiction after reordering $x_i$'s.
Thus $f_1(x_3, \ldots, x_n)<0$. Holding $x_i+x_j$ fixed, one can see that $f(\mathbf{x})$ is minimized when $x_i=x_j$. Similarly, we can show that all non-zero entries in $\mathbf{x}$ are equal. Thus there is a positive integer $m$ such that $x_1=\ldots =x_m = \frac{1}{m}$, as desired.
\end{proof}

Note that this implies that the convex hull of $\{(e_2(\mathbf{x}), e_3(\mathbf{x}))\mid \mathbf{x}\in \mathbf{X}, e_1(\mathbf{x}) = 1\}$ 
is the same as the convex hull of $\{(e_2(\mathbf{x}), e_3(\mathbf{x}))\mid x_1=\ldots = x_m = \frac{1}{m} \textup{ and } x_{m+1}= x_{m+2} = \ldots=0 \textup{ for some } m\in \NN\}$, or equivalently by evaluating, as the convex hull of $\{(\frac{m-1}{2m}, \frac{(m-1)(m-2)}{6m^2}) \mid m\in \NN\}$. Also note that this is equivalent to Bollob\'as' result \cite{Bol}, and that the exact profile can be fully understood as an easy corollary from Razborov's work \cite{RazTriangle}.

\begin{claim}\label{claim:2}The convex hull of $\{(p_2(\mathbf{x}), p_3(\mathbf{x}))\mid \mathbf{x}\in \mathbf{X}, p_1(\mathbf{x}) = 1\}$ is equal to the convex hull of $\{(\frac{1}{m}, \frac{1}{m^2})| m\in\NN\}$. 
\end{claim}
\begin{proof}
In general, through Newton's identities, we know that $p_2(\mathbf{x})=(e_1(\mathbf{x}))^2-2e_2(\mathbf{x})$, and $p_3(\mathbf{x})=(e_1(\mathbf{x}))^3-3e_1(\mathbf{x})e_2(\mathbf{x})+3e_3(\mathbf{x})$. Since we are setting $e_1(\mathbf{x})=p_1(\mathbf{x})=1$, we have $p_2(\mathbf{x})=1-2e_2(\mathbf{x})$ and $p_3(\mathbf{x})=1-3e_2(\mathbf{x})+3e_3(\mathbf{x})$, and so the convex hull of $\{(p_2(\mathbf{x}), p_3(\mathbf{x}))\mid \mathbf{x}\in \mathbf{X}, p_1(\mathbf{x}) = 1\}$ will be an affine transformation of the convex hull of $\{(e_2(\mathbf{x}), e_3(\mathbf{x}))\mid \mathbf{x}\in \mathbf{X}, e_1(\mathbf{x}) = 1\}$. Under this map, the points $(\frac{m-1}{2m}, \frac{(m-1)(m-2)}{6m^2})$ go to $(\frac{1}{m}, \frac{1}{m^2})$, and so the statement holds.
\end{proof}

\begin{claim}\label{claim:convex}
 The convex hull of $\{(x_q(W), y_q(W)) \mid W \in \mathcal{W}, t(N_{4,q}, W) \neq 0\}$ is contained in the convex hull of $\{(\frac{1}{m}, \frac{1}{m^2})| m\in\NN\}$. 
\end{claim}
\begin{proof}
Note for any real $\alpha \neq 0$ and any $W \in \mathcal{W}$, $x_q(W) =x_q(\alpha W)$, and $y_q(W) = y_q(\alpha W)$. Also notice that $t(N_{4,q}, \alpha W) =\alpha^{4\binom{q}{2}}t(N_{4,q},  W)$. Thus whenever $t(N_{4,q}, W)\neq 0$, we may assume $|t(N_{4,q}, W)| = 1$ by properly scaling $W$. Since $t(N_{4,q}, W) \geq 0$ by Lemma~\ref{cor:relationhomeigenvalues}, we assume $t(N_{4,q}, W) = 1$. 
Therefore the convex hull of $\{(x_q(W), y_q(W)) \mid W \in \mathcal{W}, t(N_{4,q}, W) \neq 0\}$ is the same as the convex hull of $\{(t(N_{8,q}, W), t(N_{12,q}, W)) \mid W \in \mathcal{W}, t(N_{4,q}, W) = 1\}$. 

Let $\bm{\lambda}=(\lambda_1, \lambda_2, \ldots)$ be the spectrum of $M_{W,q}$ where $|\lambda_1| \geq |\lambda_2| \geq \dots$. By letting $x_i=\lambda_i^4$ for every $i$, we have that $\mathbf{x} = (x_1, x_2, \dots) \in \mathbf{X}$. 
By Lemma~\ref{cor:relationhomeigenvalues},  $p_1(\mathbf{x})=p_4(\bm{\lambda})=t(N_{4,q},W)$, $p_2(\mathbf{x})=p_8(\bm{\lambda})=t(N_{8,q},W)$ and $p_3(\mathbf{x})=p_{12}(\bm{\lambda})=t(N_{12,q},W)$. So by Claim \ref{claim:2}, the convex hull of $\{(t(N_{8,q}, W), t(N_{12,q}, W)) \mid W \in \mathcal{W} \textup{ such that }t(N_{4,q}, W)=1\}$ is contained in the convex hull of $\{(\frac{1}{m}, \frac{1}{m^2})| m\in\NN\}$, as desired.
\end{proof}
This ends the proof of Lemma~\ref{lem:convex}.
\end{proof}

We give a name to the region in the statement of Lemma \ref{lem:convex}. Note that this region does not depend on the choice of $q$.

\begin{definition}\label{def:R}
Let $\mathcal{R}:=\{(x,y)\in [0,1]^2: y\leq x \textup{ and } y \geq \frac{2r+1}{r(r+1)}x-\frac{1}{r(r+1)} \textup{ for } \frac{1}{r+1}\leq x \leq \frac{1}{r} \textup{ for } r\in \NN\}$ (see Figure \ref{fig:regionR}). Observe that $\mathcal{D}_q\subseteq \mathcal{R}$ for every integer $q\geq 2$. 
Notice that $\mathcal{R}$ is the convex hull of the points $\{(1/n, 1/n^2), n \in \mathbb{N}\}$. 

\begin{center}
\begin{figure}[h]
\includegraphics[scale=0.5]{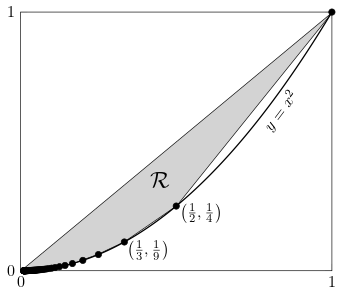}
\caption{Region $\mathcal{R}$ that contains $\mathcal{D}_q$ for every $q\geq 2$.}\label{fig:regionR}
\end{figure}
\end{center}
\end{definition}

\subsection{Obtaining $k$ independent copies of $\mathcal{D}_q$}\label{subsec:k}

In Lemma \ref{lem:firstrealization}, we realized an infinite sparse set of points on $y=x^2$ for every $\mathcal{D}_q$. 
In the next section, as explained in the proof outline, in order to simultaneously plug in independent values for each $x_i$ in $p(x_1, \dots, x_k) \in \mathbb{Z}[x]$, it will be useful to simultaneously realize  $(\frac{1}{r_q},\frac{1}{r_q^2})\in \mathcal{D}_q$ with a single construction for different positive integer $r_q$'s for every $2 \leq q \leq \ell$ (in the sense of Proposition \ref{prop:specialgraph}). To do so, we now use $q$-ification again on a construction due to Alon \cite{Alon}. 

Recall that $x_q(W):=\frac{t(N_{8,q},W)}{t(N_{4,q},W)^2}$ and $y_q(W):=\frac{t(N_{12,q},W)}{t(N_{4,q},W)^3}$

\begin{definition}\label{def:D<l}
For $\ell\in \NN$ such that $\ell \geq 2$, let \begin{align*}\mathcal{D}_{\leq \ell}:=\textup{cl}\left(\left\{ \left(
x_2(W), y_2(W), x_3(W), y_3(W), \dots, x_\ell(W), y_\ell(W)
\right) :  
 W \in \mathcal{W},   t(N_{4,q},W)\neq 0 \ \forall 2 \leq q \leq \ell \right\}\right).\end{align*} 
\end{definition}

Our goal in this subsection is to prove the following proposition. 
\begin{proposition}\label{prop:specialgraph}
For any set of positive integers $r_2, \ldots, r_\ell$, $(\frac{1}{r_2}, \frac{1}{r_2^2},\frac{1}{r_3}, \frac{1}{r_3^2}, \ldots, \frac{1}{r_\ell}, \frac{1}{r_\ell^2})\in \mathcal{D}_{\leq \ell}$. 
\end{proposition}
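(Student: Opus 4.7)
The plan is to exhibit, for any choice of positive integers $r_2,\ldots,r_\ell$, a kernel $W\in\mathcal{W}^*$ (which may be taken $\{0,1\}$-valued, i.e., a graphon of an explicit graph) such that for each $q\in\{2,\ldots,\ell\}$ the operator $M_{W,q}$ has exactly $r_q$ nonzero eigenvalues, all of the same absolute value. Given such a $W$, write the common magnitude as $\alpha_q$; then by Lemma~\ref{cor:relationhomeigenvalues} one has $t(N_{c,q},W)=r_q\alpha_q^c$ for every even $c$, so
$$x_q(W)=\frac{r_q\alpha_q^8}{(r_q\alpha_q^4)^2}=\frac{1}{r_q},\qquad y_q(W)=\frac{r_q\alpha_q^{12}}{(r_q\alpha_q^4)^3}=\frac{1}{r_q^2}$$
simultaneously for all $q\in\{2,\ldots,\ell\}$, and $t(N_{4,q},W)\neq 0$ is automatic. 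Thus the entire problem reduces to designing a single $W$ whose auxiliary kernels $M_{W,q}$ have this \emph{$r_q$ equal-eigenvalue} profile at each of the $\ell-1$ levels independently.

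To construct such a $W$, I would follow the authors' hint and apply $q$-ification to an Alon-type base graph. The guiding identity $t(H^{(q)},W)=t(H,M_{W,q})$ (derived, as in Lemma~\ref{cor:relationhomeigenvalues}, by integrating out the interior vertices of the edge-blobs of $H^{(q)}$) shows that $q$-ification at the graph level corresponds to passage from $W$ to $M_{W,q}$ at the kernel level. Starting from Alon's construction, one forms $W$ by taking an appropriately weighted combination (disjoint union with a carefully chosen measure profile, refined by blow-ups) of the $q$-ifications of Alon's graph at each scale $q$. The block sizes are tuned so that, at level $q$, the $q$-ification block designed for that level contributes exactly $r_q$ equal-magnitude nonzero eigenvalues to $M_{W,q}$, while the remaining blocks contribute in a controlled way to this same level. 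Since $\mathcal{D}_{\leq\ell}$ is defined as a closure, it is permissible to realize the target point as a limit of such constructions, which gives additional flexibility (analogous to the graphon-limit argument used in the proof of Lemma~\ref{lem:firstrealization}, where the "$r$-blob" kernel is the graphon limit of $r$ growing disjoint cliques).

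The verification is a direct spectral computation: for each $q\in\{2,\ldots,\ell\}$ one expands the spectrum of $M_{W,q}$ block-by-block using the scaling rules for disjoint unions (the eigenvalues of $M_{V_i,q}$ scale by $\mu_i^{q-1}$ when $V_i$ is placed on a subinterval of measure $\mu_i$, exactly as in the proof of Lemma~\ref{lem:firstrealization}), combined with the known spectrum of Alon's base graph. One then confirms that these eigenvalues consist of $r_q$ equal nonzero values (up to sign) and zeros, and summing the relevant powers recovers $1/r_q$ and $1/r_q^2$.

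The main obstacle will be the \emph{independent} spectral control across the $\ell-1$ different levels of $q$. The simplest operations — tensor products and uniform disjoint unions of blobs — all couple the spectra at different $q$'s: they either multiply or jointly scale the ratios $x_q(W)$ across all levels, so that a construction which achieves the right value for one $q$ produces the same $r_q$ for every $q$, rather than independent $r_q$'s. The key new input is Alon's construction, which supplies the precise combinatorial structure whose $q$-ifications interact only mildly across levels; the delicate technical step is to verify that the cross-level contributions (blocks designed for level $q'$ but still appearing in $M_{W,q}$ for $q\neq q'$) either vanish, or combine consistently to preserve the $r_q$-equal-magnitude structure at each level. This decoupling, together with the explicit spectral computation sketched above, is what makes the choice of the $r_q$'s truly free.
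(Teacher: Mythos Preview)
Your outline has the right architecture --- a graphon built as a disjoint union of $q$-ifications $A(k_s,s)$ of Alon's graph, one block per level $s\in\{2,\ldots,\ell\}$, with an appeal to closure --- and this matches the paper's construction. But the two mechanisms you flag as ``either vanish, or combine consistently'' are not identified correctly, and as stated your plan would not go through.

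First, the target you set is too strong: no $W$ assembled from finitely many $A(k,s)$'s will make $M_{W,q}$ have \emph{exactly} $r_q$ equal nonzero eigenvalues. Alon's graph $A(k,2)$ is used not because it has a single nonzero eigenvalue, but because it is pseudorandom: its adjacency spectrum has one large eigenvalue $\Theta(2^{2k})$ and \emph{all remaining} eigenvalues only $O(2^k)$ (and there are $\Theta(2^{3k})$ of them). Hence $r_q$ disjoint copies of $A(k_q,q)$ contribute to $M_{W,q}$ a packet of $r_q$ nearly-equal dominant eigenvalues together with a sea of much smaller ones, so that $\sum_i\lambda_i^{4m}=(1+o(1))\,r_q\,\lambda_{\max}^{4m}$. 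This is what drives $x_q(W),y_q(W)\to 1/r_q,1/r_q^2$, but only in the limit $k\to\infty$; the target point is reached via the closure, not exactly.

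Second, the cross-level contributions split into two regimes with \emph{different} mechanisms. For blocks $G_s$ with $s<q$ the contribution genuinely vanishes: $A(k,2)$ is triangle-free, so its $s$-ification contains no $K_q$ for $q>s$, and therefore $M_{A(k,s),q}\equiv 0$ (this is Lemma~\ref{lem:qadj}). This vanishing is the real reason a \emph{triangle-free} pseudorandom base is needed. For blocks $G_s$ with $s>q$, however, the contribution does \emph{not} vanish and does \emph{not} combine to preserve any equal-magnitude structure; it is simply drowned out by a size hierarchy. The paper takes $k_s=(\ell-s+1)K$, so that among the surviving blocks $s\ge q$ the one with $s=q$ has the largest parameter $k_q$ and hence the largest top eigenvalue $\Theta(2^{2k_q})$, making every other surviving block's contribution $o(1)$ relative to it as $K\to\infty$. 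Without this hierarchy (or some equivalent scale separation), the blocks with $s>q$ pollute level $q$ uncontrollably and the decoupling you are hoping for fails.
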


Note that Theorem~\ref{thm:profile} follows from Proposition~\ref{prop:specialgraph}.

To prove Proposition \ref{prop:specialgraph}, we need a construction of $W$ for any given $r_2, \dots, r_\ell$. The starting point of our construction $W$ is a disjoint union of blocks, each of the style described in the next definition.

\begin{definition}
Let $A(k,2)$ be an (unweighted) triangle-free graph with $n$ vertices that is $d$-regular and where the second largest eigenvalue (in absolute value) of the adjacency matrix of $A(k,2)$ is $\lambda$, where $n=\Theta(2^{3k})$, $d=\Theta(2^{2k})$, and $\lambda=\Theta(2^k)$. Here the constants in $\Theta$ are absolute constants. For an integer $q\geq 3$, let $A(k,q)$ be the  $q$-ification of $A(k,2)$.

The graphs $A(k,2)$ are known as $(n,d,\lambda)$-graphs in the literature, and were first constructed by Alon in \cite{Alon}. 
 Since $A(k,2)$ has $\frac{nd}{2}=\Theta(2^{5k-1})$ edges and since the $q$-ification introduces $q-2$ vertices for every edge of $A(k,2)$, $A(k,q)$ has $n+(q-2)\frac{dn}{2}=\Theta(2^{3k}+(q-2)2^{5k-1})$ vertices. The original vertices coming from $A(k,2)$ have degree $d(q-1)$, and the new vertices have degree $q-1$. Further, the original vertices are each contained in $d$ $q$-cliques, whereas the new vertices are contained in exactly one $q$-clique. 
\end{definition}

In the proof of Proposition \ref{prop:specialgraph}, each block of our construction $W$ will look like $A(k_s, s)$ for some appropriately chosen $k_s$ for each $2 \leq s \leq \ell$. Thus to understand $t(N_{4,q}, W), t(N_{8, q}, W)$ and $t(N_{12, q}, W)$, we need to understand $M_{A(k_s, s), q}$ (as defined in Definition~\ref{def:MWq2}), and bound its eigenvalues to apply Lemma~\ref{cor:relationhomeigenvalues}. Recall that $W_G$ is the graphon corresponding to graph $G$. 

\begin{lemma}\label{lem:qadj}
For  $s, q \geq 2$, suppose $A(k,s)$ has $n_s$ vertices. Then 
$M_{A(k,s),q}=\binom{s-2}{q-2}\frac{W_{A(k,s)}}{n_s^{q-2}}$. In particular, this means that $M_{A(k,q),q}=W_{A(k,q)}$.
\end{lemma}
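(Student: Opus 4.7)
The plan is to compute $M_{A(k,s),q}(x,y) = t_{x,y}(K_q^{\bullet\bullet}, W_{A(k,s)})$ directly from its integral definition and reduce the calculation to a combinatorial count in the graph $A(k,s)$. Fix $x \in I_u$ and $y \in I_v$. Because $W_{A(k,s)}$ is $\{0,1\}$-valued and constant on each product cell $I_{w_3} \times \cdots \times I_{w_q}$, the integral collapses into a sum of $1/n_s^{q-2}$ times the number of ordered tuples $(w_3,\ldots,w_q)$ of vertices for which $\{u,v,w_3,\ldots,w_q\}$ spans a $K_q$ in $A(k,s)$; distinctness of the $w_i$'s is forced by the absence of loops. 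Since the edge $\{1,2\}$ of $K_q$ contributes a factor $W_{A(k,s)}(x,y)$, both sides of the claimed identity vanish when $u \not\sim v$, so we may assume $u$ and $v$ are adjacent.

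The crux of the argument is the structural claim that $N(u) \cap N(v)$ equals $C \setminus \{u,v\}$, where $C$ is the unique $s$-clique of $A(k,s)$ containing the edge $\{u,v\}$; in particular, $N(u) \cap N(v)$ is itself an $(s-2)$-clique. For uniqueness, I will show that two distinct $s$-cliques of $A(k,s)$ share at most one vertex. Each $s$-clique of $A(k,s)$ arises from exactly one edge of $A(k,2)$; each new vertex lies in only its defining clique, so a two-vertex overlap involving any new vertex is impossible, while a two-vertex overlap of two original vertices would force the originating edges of $A(k,2)$ to coincide. For the containment $N(u) \cap N(v) \subseteq C$, I will take $w \in N(u) \cap N(v)$ and analyze the three cases (both $u,v$ original, one original and one new, both new): either $u$ or $v$ being new pins $w$ into the unique clique through that new vertex, and if all of $u,v,w$ are original, then triangle-freeness of $A(k,2)$ forbids such a $w$ outside $C$.

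Once $N(u) \cap N(v)$ is identified as an $(s-2)$-clique, the combinatorial count is straightforward: any $(q-2)$-subset of an $(s-2)$-clique induces $K_{q-2}$, so the number of qualifying ordered tuples reduces (up to bookkeeping of labelings) to the binomial factor $\binom{s-2}{q-2}$, yielding $M_{A(k,s),q}(x,y) = \binom{s-2}{q-2} W_{A(k,s)}(x,y)/n_s^{q-2}$. Combined with the vanishing case, this gives the stated identity, and the special case $s = q$ follows by substitution. I expect the main obstacle to be the case analysis supporting the structural claim about $N(u) \cap N(v)$ — once the triangle-freeness of $A(k,2)$ is leveraged to rule out spurious common neighbors, the remainder is routine.
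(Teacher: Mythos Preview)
Your proposal is correct and is essentially the paper's argument written out in more detail: both use triangle-freeness of $A(k,2)$ to conclude that the $s$-cliques created by the $s$-ification are the only maximal cliques of $A(k,s)$ (equivalently, your claim $N(u)\cap N(v)=C\setminus\{u,v\}$), so every edge of $A(k,s)$ sits in exactly $\binom{s-2}{q-2}$ unlabeled $q$-cliques. Your case analysis on the types of $u,v$ merely unpacks what the paper compresses into one sentence; note, however, that the number of \emph{ordered} $(q{-}2)$-tuples is $(q{-}2)!\binom{s-2}{q-2}$, so the ``bookkeeping of labelings'' you allude to hides a factorial that the paper's stated constant also appears to omit---harmlessly, since only the proportionality $M_{A(k,s),q}\propto W_{A(k,s)}$ and its vanishing for $s<q$ are used downstream.
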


\begin{proof}
Since $A(k,2)$ contains no clique of size 3, the only $s$-cliques in $A(k,s)$ are those that correspond to edges in $A(k,2)$. In fact, these $s$-cliques are the only maximal cliques in $A(k,s)$ Therefore, every edge of $A(k,s)$ is in exactly $\binom{s-2}{q-2}$ $q$-clique. The proof is complete by (\ref{eq:MG2}) that $M_{A(k,s),2} = W_{A(k,s)}$.
\end{proof}

\begin{lemma}\label{lem:secondeig}
The top eigenvalue of the adjacency matrix of graph $A(k,s)$ is $\Theta(2^{2k})$, while all the other eigenvalues are at most $O(2^k)$. Here the asymptotic is as $k \to \infty$ and $s$ is a fixed constant. 
\end{lemma}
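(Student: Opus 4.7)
The plan is to exploit the block structure of $A(k,s)$ inherited from the $s$-ification to reduce the eigenvalue question to the known spectrum of $A(k,2)$. Write $A$ for the adjacency matrix of $A(k,2)$, so $A(k,2)$ has $n=\Theta(2^{3k})$ vertices, $m=nd/2=\Theta(2^{5k-1})$ edges, and is $d$-regular with $d=\Theta(2^{2k})$. The vertex set of $A(k,s)$ splits into the $n$ original vertices and, for each edge $e\in E(A(k,2))$, a set $V_e$ of $s-2$ new vertices that form an $s$-clique with the two endpoints of $e$. Ordering the basis accordingly, the adjacency matrix $\hat A$ of $A(k,s)$ takes the block form
\[
\hat A = \begin{pmatrix} A & B \\ B^{T} & C \end{pmatrix},
\]
where $B_{u,(e,i)} = \mathbf{1}_{u \in e}$ and $C$ is block-diagonal with $m$ blocks each equal to $J_{s-2}-I_{s-2}$.

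The first step is to isolate a large $\hat A$-invariant subspace with trivial spectrum. Let $U_1$ consist of vectors $(0,y)$ for which $y_e \in \RR^{s-2}$ sums to zero for every edge $e$; a direct check gives $By=0$ and $Cy=-y$, so $\hat A|_{U_1}=-I$, contributing eigenvalue $-1$ with multiplicity $m(s-3)$. The orthogonal complement $U_2$ is spanned by vectors $(x,(c_e\mathbf{1}_{s-2})_e)$ with $x\in\RR^n$ and $c\in\RR^m$, and in this $(x,c)$-parametrization $\hat A|_{U_2}$ becomes
\[
M = \begin{pmatrix} A & (s-2)B^{*} \\ (B^{*})^{T} & (s-3)I \end{pmatrix}, \qquad (B^{*})_{u,e} = \mathbf{1}_{u\in e}.
\]

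The second step is to diagonalize $M$. Whenever $\lambda\neq s-3$, eliminating $c=(\lambda-s+3)^{-1}(B^{*})^{T}x$ from $M(x,c)=\lambda(x,c)$ and using the identity $B^{*}(B^{*})^{T}=A+dI$ (from $d$-regularity together with triangle-freeness of $A(k,2)$, which forbids two distinct edges from sharing both endpoints) yields
\[
A x\,(\lambda+1) = \bigl(\lambda^{2}-(s-3)\lambda-(s-2)d\bigr)\, x,
\]
so $x$ is an eigenvector of $A$ with eigenvalue $\mu$, where $(\mu,\lambda)$ satisfies the quadratic $\lambda^{2}-(\mu+s-3)\lambda-((s-2)d+\mu)=0$. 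Each $\mu\in\operatorname{spec}(A)$ yields two values of $\lambda$, for $2n$ eigenvalues of $M$; the remaining $m-n$ come from vectors $(0,c)$ with $B^{*}c=0$ (note $\operatorname{rank}B^{*}=n$ since $A+dI$ is positive definite), all with eigenvalue $s-3$, and the dimensions match.

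The final step is the asymptotic estimate. For $\mu=d$ the larger root is $\lambda_{+}=\tfrac12\bigl((d+s-3)+\sqrt{(d+s-3)^{2}+4(s-1)d}\bigr)=d+\Theta(1)=\Theta(2^{2k})$, giving the claimed top eigenvalue. For every other eigenvalue $\mu$ of $A$, the $(n,d,\lambda)$-property of $A(k,2)$ bounds $|\mu|=O(2^{k})$, and then the discriminant $(\mu+s-3)^{2}+4((s-2)d+\mu)=4(s-2)d+O(2^{2k})=\Theta(2^{2k})$ has square root of order $O(2^{k})$, so both roots of the quadratic lie in $O(2^{k})$. Combined with the constant eigenvalues $-1$ and $s-3$ from $U_1$ and the kernel of $B^{*}$, every eigenvalue of $\hat A$ other than $\lambda_{+}$ is $O(2^{k})$. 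I expect the main obstacle to be bookkeeping the multiplicities so that all $n+m(s-2)$ eigenvalues are accounted for; once the decomposition $U_1\oplus U_2$ and the extra $(m-n)$-dimensional kernel of $B^{*}$ are in place, the bounds follow directly from the quadratic and the $(n,d,\lambda)$-inequality.
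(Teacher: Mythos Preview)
Your argument is correct and lands on the same key relation the paper uses---the quadratic $\lambda^2-(\mu+s-3)\lambda-((s-2)d+\mu)=0$ linking each eigenvalue $\mu$ of $A(k,2)$ to two eigenvalues of $A(k,s)$---but the organization is different. The paper works directly with the eigenvector equation: it bounds the top eigenvalue via interlacing (from below) and the maximum degree (from above), then assumes for contradiction that some non-top $\lambda$ satisfies $2^k/\lambda=o(1)$, shows that the quantity $f(\lambda)=\frac{\lambda^2-(s-3)\lambda-(s-2)d}{\lambda+1}$ must then be an eigenvalue of $A(k,2)$ exceeding $\Theta(2^k)$ and hence equal to $d$, and finishes by noting the resulting quadratic $f(\lambda)=d$ has only one large root. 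Your block decomposition $U_1\oplus U_2$ with a Schur-type elimination is a more systematic packaging of the same computation; it has the advantage of producing the full spectrum (including the constant eigenvalues $-1$ and $s-3$ with their exact multiplicities), so the bounds can be read off directly rather than by contradiction.

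Two small points. First, the identity $B^*(B^*)^T=A+dI$ follows from $d$-regularity and \emph{simplicity} of $A(k,2)$ (two distinct edges sharing both endpoints would be a multi-edge); triangle-freeness plays no role here. Second, you should also dispose of the \emph{smaller} root of the quadratic at $\mu=d$: since the product of the two roots is $-((s-2)d+d)=-(s-1)d$ and $\lambda_+=\Theta(d)$, the other root is $O(1)$, hence certainly $O(2^k)$.
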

\begin{proof}
 Since $A(k,2)$ is an induced subgraph of $A(k,s)$, by the eigenvalue interlacing theorem, the largest eigenvalue of $A(k,s)$ is at least that of $A(k,2)$, which is $\Theta(2^{2k})$. On the other hand, the largest eigenvalue is also at most the maximum degree of $A(k,s)$, which again is $\Theta(2^{2k})$. 
 
 We now prove the second claim. For any edge $e = (u,v)$ in $A(k,2)$, let $U_e$ be the $s-2$ vertices created for $e$ in $A(k, s)$. Let $\lambda$ be an eigenvalue of $A(k,s)$ and $\mathbf{v}$ its eigenvector and assume $|\lambda| \gg \Theta(1)$. For any $w \in U_e$, we have \begin{equation}
     \lambda \mathbf{v}(w) = \sum_{w' \in U_e}\mathbf{v}(w')  - \mathbf{v}(w)+ \mathbf{v}(v)+\mathbf{v}(u). \label{eq:eigen1}
 \end{equation} 
 By subtracting two such equations if $|U_e| \geq 2$, we obtain $\mathbf{v}(w) = \mathbf{v}(w')$ for any $w, w' \in U_e$ since $\lambda \neq -1$. Plugging this back into (\ref{eq:eigen1}) and using $|U_e| = s-2$, we have $(\lambda-s+3)\mathbf{v}(w) = \mathbf{v}(v)+ \mathbf{v}(u)$ for any $w \in U_e$, implying that the values $\mathbf{v}(w')$ are the same for all $w' \in U_e$. Furthermore, we have that
 \begin{equation}
     \sum_{w \in U_{(u,v)}}\mathbf{v}(w) = \frac{s-2}{\lambda-s+3}(\mathbf{v}(v)+ \mathbf{v}(u)).  \label{eq:eigen2}
 \end{equation}

Consider any original vertex $v$ in $A(k,2)$. We then have
\[
\lambda \mathbf{v}(v) = \sum_{u: (u,v)\in E(A(k,2)) } \left(\mathbf{v}(u)  + \sum_{w \in U_{(u,v)}} \mathbf{v}(w)\right) =\sum_{u: (u,v)\in E(A(k,2)) } \left(\mathbf{v}(u) + \frac{s-2}{\lambda-s+3}(\mathbf{v}(v)+ \mathbf{v}(u))\right)
\]
where the second equality follows from (\ref{eq:eigen2}). By rearranging and recalling that each vertex in $A(k,2)$ has degree $d = \Theta(2^{2k})$, we have that
\begin{equation}
    \left(\lambda - \frac{d(s-2)}{\lambda-s+3}\right) \mathbf{v}(v) =\left( 1+ \frac{s-2}{\lambda-s+3}\right) \sum_{u: (u,v)\in E(A(k,2)) } \mathbf{v}(u).
\end{equation}
Moving $1+\frac{s-2}{\lambda-s+3}$ to the left, this quantity is non-zero since $|\lambda|$ is large. Thus \[ f(\lambda) := \frac{\lambda - \frac{d(s-2)}{\lambda-s+3}}{ 1+ \frac{s-2}{\lambda-s+3}} \]
has to be an eigenvalue of $A(k,2)$,  with the same eigenvector as $\mathbf{v}$ when restricted to vertices in $A(k,2)$. 

We can now prove the second part of the lemma by contradiction. 
If $\lambda$ is not the largest eigenvalue of $A(k,s)$, but $\frac{2^k}{\lambda} = o(1)$ as $k \to \infty$, then $f(\lambda) = \Theta(\lambda) \gg 2^k$ since $p$ is a fixed constant and $d = \Theta(2^{2k})$. However, the only eigenvalue of $A(k,2)$ much larger than $\Theta(2^k)$ is the top eigenvalue, which is $d$. Thus $f(\lambda)=d$. Rearranging the equation $f(\lambda)=d$, any eigenvalue $\lambda$ satisfying $\frac{2^k}{\lambda} = o(1)$ is a root of $\lambda(\lambda - s +3) - d(s-2) = d(\lambda - s + 3 + s-2)$. The product of the roots of this quadratic equation is $-d(s-2)-d$. Since we have shown the largest eigenvalue of $A(k,s)$ is at least $d$, the magnitude for the other root is at most $\frac{d(s-2)+d}{d} = O(1)$.
Since $A(k, s)$ is connected, the largest eigenvalue has multiplicity one. This leads to a contradiction to our assumption on $\lambda$. 
\end{proof}

\begin{corollary}\label{cor:eigs}
Fix $s \geq q \geq 2$. The top eigenvalue of $M_{A(k,s),q}$ is $\Theta(2^{2k-5k(q-1)})$, and all the other eigenvalues are at most $O(2^{k-5k(q-1)})$.
\end{corollary}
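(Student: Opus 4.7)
The plan is a direct two-step reduction to the adjacency-matrix eigenvalue estimates already established. First, by Lemma \ref{lem:qadj},
\[
M_{A(k,s),q} \;=\; \binom{s-2}{q-2}\, n_s^{-(q-2)}\, W_{A(k,s)},
\]
where $n_s$ denotes the number of vertices of $A(k,s)$, so the spectrum of $M_{A(k,s),q}$ is just the spectrum of $W_{A(k,s)}$ rescaled by the positive constant $\binom{s-2}{q-2}\, n_s^{-(q-2)}$ (positive because $s \geq q \geq 2$).

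Second, I would invoke the standard fact that for a graph $G$ on $n$ vertices the nonzero eigenvalues of the step-function graphon $W_G$, viewed as an integral operator, are precisely the eigenvalues of the adjacency matrix of $G$ divided by $n$ (see, e.g., \cite{Lovaszbook}). Applying this to $G = A(k,s)$, the eigenvalues of $M_{A(k,s),q}$ are $\binom{s-2}{q-2}\, n_s^{-(q-1)}\, \mu_i$, where $\mu_i$ ranges over the adjacency spectrum of $A(k,s)$. The quantitative input from Lemma \ref{lem:secondeig} then gives $\mu_1 = \Theta(2^{2k})$ for the top adjacency eigenvalue and $|\mu_i| = O(2^k)$ for the rest, while the vertex count $n_s = n + (s-2)\tfrac{dn}{2} = \Theta(2^{3k} + (s-2)2^{5k-1}) = \Theta(2^{5k})$ yields $\Theta(2^{2k})/\Theta(2^{5k(q-1)}) = \Theta(2^{2k - 5k(q-1)})$ for the top eigenvalue of $M_{A(k,s),q}$ and $O(2^{k - 5k(q-1)})$ for all others, as claimed.

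There is no real obstacle here; the whole proof is bookkeeping that three scaling factors (the $q$-ification constant, the graphon-to-adjacency rescaling, and the adjacency eigenvalue bounds) compose correctly. The one mild care needed is that the asymptotic $n_s = \Theta(2^{5k})$ uses $s \geq 3$, so that the term $(s-2) 2^{5k-1}$ dominates $2^{3k}$; the remaining corner case $s = q = 2$ reduces at once to $M_{A(k,2),2} = W_{A(k,2)}$ and can be checked directly from Lemma \ref{lem:secondeig} together with $n_2 = \Theta(2^{3k})$.
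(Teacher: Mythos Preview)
Your approach is correct and essentially identical to the paper's: it too just combines Lemmas~\ref{lem:qadj} and~\ref{lem:secondeig} with the vertex count $n_s=\Theta(2^{5k})$ and the adjacency-to-graphon rescaling. You are in fact more careful than the paper in flagging that $n_s=\Theta(2^{5k})$ requires $s\geq 3$.

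One small correction, though: the corner case $s=q=2$ cannot be ``checked directly'' to satisfy the stated bound. There $M_{A(k,2),2}=W_{A(k,2)}$ has top eigenvalue $d/n=\Theta(2^{2k}/2^{3k})=\Theta(2^{-k})$, whereas the corollary asserts $\Theta(2^{2k-5k(q-1)})=\Theta(2^{-3k})$. This is really a minor inaccuracy in the paper's own statement (its proof also uses $n_s=\Theta(2^{5k})$ unconditionally), not a defect in your method; and it does no harm downstream, since in the proof of Proposition~\ref{prop:specialgraph} only the product $\mu_{s,q,1}=(\text{top eigenvalue})\cdot n_s^{q-1}$ matters, and that product is $\Theta(2^{2k_s})$ either way.
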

\begin{proof}
This is a direct consequence of Lemmas \ref{lem:qadj} and \ref{lem:secondeig} and the fact that $A(k,s)$ has $n_s = \Theta(2^{5k})$ vertices. Here notice that the eigenvalues of $M_{A(k,s),2}= W_{A(k,s)}$ are the ones of the adjacency matrix of $A(k,s)$ scaled by $\frac{1}{n_s}$. 
\end{proof}

\begin{lemma}\label{lem:eigenvaluesremaining}
All but at most $O(2^{3k})$ eigenvalues of $A(k,s)$ are $O(1)$. Thus all but at most $O(2^{3k})$ eigenvalues of $M_{A(k,s),q}$ are $O(2^{-5k(q-1)})$.
\end{lemma}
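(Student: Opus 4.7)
The plan is to decompose the vertex space $\mathbb{R}^{V(A(k,s))}$ into two $A(k,s)$-invariant subspaces: $V_1$, consisting of vectors that are constant on each clique $U_e$ (the $s-2$ vertices added by $s$-ifying the edge $e$ of $A(k,2)$), and its orthogonal complement $V_1^\perp$. I would first check that for $\mathbf{v} \in V_1^\perp$, namely $\mathbf{v}$ vanishing on every original vertex and summing to zero on each $U_e$, one directly computes $(A\mathbf{v})(w) = -\mathbf{v}(w)$ for $w \in U_e$ and $(A\mathbf{v})(u) = 0$ for every original $u$. Hence $V_1^\perp$ is contained in the $(-1)$-eigenspace of $A(k,s)$, so all eigenvalues contributed by $V_1^\perp$ equal $-1 = O(1)$ (for $s = 2, 3$ the subspace $V_1^\perp$ is trivial and there is nothing to check).

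Next I would count the eigenvalues on $V_1$ that can exceed $O(1)$. Parameterize $\mathbf{v} \in V_1$ by a value $\mathbf{v}(u)$ at each original vertex and a single value $\mathbf{v}(e)$ on each $U_e$. The eigenvalue equation on $V_1$ is exactly the reduction performed in the proof of Lemma \ref{lem:secondeig}: the equation at any $w \in U_e$ yields $\mathbf{v}(e) = \frac{\mathbf{v}(u_1) + \mathbf{v}(u_2)}{\lambda - s + 3}$ whenever $\lambda \neq s - 3$, and substituting this into the equation at an original vertex forces $f(\lambda) = \frac{\lambda - (s-2)d/(\lambda - s + 3)}{1 + (s-2)/(\lambda - s + 3)}$ to be an eigenvalue of the adjacency matrix of $A(k,2)$. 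Since $f(\lambda) = \mu$ is a quadratic equation in $\lambda$, and since $A(k,2)$ has only $n = \Theta(2^{3k})$ eigenvalues, there are at most $2n = O(2^{3k})$ eigenvalues of $A(k,s)$ on $V_1$ that are not equal to $s - 3$. All remaining eigenvalues on $V_1$ equal $s - 3$, which is $O(1)$.

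Combining both subspaces, all but at most $O(2^{3k})$ eigenvalues of $A(k,s)$ equal $-1$ or $s - 3$, and in particular are $O(1)$, giving the first claim. The second claim then follows immediately from Lemma \ref{lem:qadj}: since $M_{A(k,s),q} = \binom{s-2}{q-2} W_{A(k,s)} / n_s^{q-2}$ and the eigenvalues of the graphon $W_{A(k,s)}$ are those of the adjacency matrix of $A(k,s)$ divided by $n_s = \Theta(2^{5k})$, every $O(1)$ eigenvalue of $A(k,s)$ rescales to $O(n_s^{-(q-1)}) = O(2^{-5k(q-1)})$ in $M_{A(k,s),q}$.

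The main obstacle I anticipate is bookkeeping in the second step: making sure that the borderline case $\lambda = s - 3$ does not secretly contribute large eigenvalues. Since $s$ is a fixed constant, $s - 3 = O(1)$, so this branch automatically contributes only $O(1)$ eigenvalues, and the quadratic count on the other branch yields $2n = O(2^{3k})$ cleanly, making the lemma follow without needing a sharper analysis of the $(s-3)$-eigenspace.
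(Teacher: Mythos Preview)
Your argument is correct but takes a different route from the paper. The paper's proof is a two-line application of the Cauchy interlacing theorem: deleting the $n=\Theta(2^{3k})$ original vertices of $A(k,2)$ from $A(k,s)$ leaves a disjoint union of copies of $K_{s-2}$, whose adjacency eigenvalues all lie in $\{-1,s-3\}$; interlacing then forces all but at most $O(n)$ eigenvalues of $A(k,s)$ into the interval $[-1,s-3]$. Your approach instead builds an explicit $A(k,s)$-invariant decomposition $V_1\oplus V_1^\perp$, identifies $V_1^\perp$ with the $(-1)$-eigenspace directly, and on $V_1$ reuses the reduction from Lemma~\ref{lem:secondeig} to see that any eigenvalue $\lambda\neq s-3$ forces $f(\lambda)$ to be an eigenvalue of $A(k,2)$, so by the quadratic relation there are at most $2n$ such $\lambda$ counted with multiplicity. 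This is longer but more informative: you actually pin down the small eigenvalues as exactly $-1$ or $s-3$ rather than merely bounding them, and you make the link to Lemma~\ref{lem:secondeig} explicit. The interlacing proof, on the other hand, is shorter, self-contained, and avoids re-deriving the eigenvector structure. Both yield the second claim identically via Lemma~\ref{lem:qadj}.
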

\begin{proof}
Let the eigenvalues of $A_{k,s}$ 
be $\lambda_1\geq \lambda_2 \geq \ldots $. Let $n$ be the number of vertices in $A(k,2)$, which is $\Theta(2^{3k})$. By the interlacing theorem applied to adjacency matrices, we know that $\lambda_{1+n}$ is at most the largest eigenvalue of the adjacency matrix of the graph obtained by removing the $n$ original vertices in $A(k,2)$ vertices from $A(k,s)$. This induced subgraph  consists of the disjoint union of cliques of size $q-2$, and thus has eigenvalues that are at most $q-3$.  So $\lambda_{1+n}\leq q-3$. Similarly, each eigenvalue in $K_{q-3}$ is at least $-1$, thus all but at most $n$ eigenvalues are at least  $-1$. 
\end{proof}

We are now ready to prove Proposition \ref{prop:specialgraph}.
\begin{proof}[Proof of Proposition \ref{prop:specialgraph}]
For each $2 \leq s \leq \ell$, 
let $k_s = (\ell-s + 1)K$ for some large positive integer $K$. All the asymptotics in this proof are as $K \to \infty$, and the constants are in terms of $\ell$ and $r_s$'s. Let graphon $W$ correspond to the disjoint union of $G_2$, $G_3$, $\ldots$, $G_\ell$ where $G_s$ consists of $r_s$ disjoint copies of $A(k_s,s)$. Each $A(k_s,s)$ has $n_s  = \Theta(2^{5k_s})$ vertices, and let $N = \sum_s r_s n_s$ be the total number of vertices in the graph.
Thus each block in $M_{W,q}$ corresponding to $A(k_s, s)$ is $M_{A(k_s,s),q}$ scaled by $\frac{n_s^{q-2}}{N^{q-2}}$. 
When fixing $s, q$, let $\mu_{s,q,i}$ be the $i$-th largest eigenvalue of $M_{A(k_s,s),q}$ multiplied by $n_s^{q-2} \cdot n_s$.

Thus by Lemma~\ref{cor:relationhomeigenvalues}, 
\begin{align*}
x_q(W)&=\frac{\sum_{s=2}^\ell r_s\left(\sum_{i} \mu_{s,q,i} ^8\right)}{\left(\sum_{s=2}^\ell r_s\left(\sum_i \mu_{s,q,i}^4\right)\right)^2}=\frac{\sum_{s=q}^\ell r_s\left(\sum_{i} \mu_{s,q,i}^8\right)}{\left(\sum_{s=q}^\ell r_s\left(\sum_i \mu_{s,q,i}^4\right)\right)^2}
\end{align*} 
The last equality holds because  when $s < q$, $M_{A(k,s),q}$ is zero by Lemma \ref{lem:qadj}. 

When $\ell \geq s \geq q \geq 2$, by  Corollary \ref{cor:eigs}, $\mu_{s,q,1}= \Theta(2^{k_s(2-5(q-1))}) n_s^{q-1} = \Theta(2^{2{k_s}})$.  Furthermore, by Corollary \ref{cor:eigs} and Lemma \ref{lem:eigenvaluesremaining}, $\sum_i \mu_{s,q,i}^4 = \mu_{s,q,1}^4 +  \sum_{i \geq 2}\mu_{s,q,i}^4 = \mu_{s,q,1}^4 + O(2^{3k_s})\cdot  O(2^{4k_s(1-5(q-1))}) n_s^{q-1}  + n_s O(2^{-20k_s(q-1)}) = (1+o(1)) \mu_{s,q,1}^4$. In addition, when $s > q$, $\mu_{s,q,1}^4= \Theta(2^{8k_s}) = o(2^{8k_q}) = o(\mu_{q,q,1}^4)$. Thus the denominator of $x_q(W)$ is $(\sum_{p=q}^\ell (1+o(1))r_s\mu_{s,q,1}^4)^2 = (1+o(1))r_s^2\mu_{q,q,1}^4$. Similarly, by the same computation, the numerator of $x_q(W)$ is $\sum_{s=q}^\ell (1+o(1))r_s\mu_{s,q,1}^8 = (1+o(1))r_s\mu_{q,q,1}^8$. Thus $x_q(W) = \frac{1+o(1)}{r_q}$. Similarly, $y_q(W) = \frac{1+o(1)}{r_q^2}$. Again here $o(1)$ is when $K \to \infty$.  Since this result holds for any $2 \leq q \leq \ell$, the claim follows. 
\end{proof}

We end this section by highlighting the fact that even though our profiles are over $W \in \mathcal{W}$,  all our constructions use $W \in \mathcal{W}_0$, and thus the same results hold had we started with profiles for $W \in \mathcal{W}^*$ for any $\mathcal{W}^* \in \mathfrak{W}$.

%----------------------------------------------------------------------------
\subsection{Completion of the proof}\label{subsec:complete}
By Matiyasevich's solution to Hilbert's tenth problem \cite{Mat70}, determining the nonnegativity of a polynomial with integer-valued variables is undecidable. We follow the same strategy as Hatami and Norin in \cite{HN11} to reduce our problem to the decidability of polynomials with integer variables. We use the integrality feature of the graph profiles proved in Lemma~\ref{lem:firstrealization}  and Proposition~\ref{prop:specialgraph} of Subsection \ref{sec:profile}. 

\begin{lemma}\label{lem:undecidableintegrality}
Given a positive integer $\ell\geq 7$ and a polynomial $p(x_2, \ldots, x_\ell)$ with integer coefficients, the problem of determining whether there exist $x_2, \ldots, x_\ell \in \{\frac{1}{n} | n\in \NN\}$ such that $p(x_2, \ldots, x_\ell) <0$ is undecidable.
\end{lemma}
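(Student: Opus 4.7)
The plan is a direct reduction from Matiyasevich's theorem via the substitution $y_i \mapsto 1/x_{i+1}$, which pulls the integer-valued domain $\NN$ back to the reciprocal set $\{1/n : n \in \NN\}$. As the paper notes immediately after stating Matiyasevich's theorem, it is undecidable to decide, given a polynomial $q(y_1, \ldots, y_k) \in \ZZ[y_1, \ldots, y_k]$, whether $q(n_1, \ldots, n_k) < 0$ for some $(n_1, \ldots, n_k) \in \NN^k$. This reformulation holds for any positive integer $k$, so I may take any $k \leq \ell - 1$ and leave the remaining variables $x_{k+2}, \ldots, x_\ell$ absent from the polynomial that I build.

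Given such a $q$, let $D$ denote the maximum degree of $q$ in any single variable, and define
$$p(x_2, \ldots, x_\ell) := (x_2 x_3 \cdots x_{k+1})^D \cdot q\!\left(\frac{1}{x_2}, \frac{1}{x_3}, \ldots, \frac{1}{x_{k+1}}\right).$$
The choice of $D$ absorbs every negative-exponent term introduced by the substitution, so $p \in \ZZ[x_2, \ldots, x_\ell]$ and its coefficients are computed effectively from those of $q$. Specializing $x_{i+1} = 1/n_i$ for $i = 1, \ldots, k$ and arbitrary $x_{k+2}, \ldots, x_\ell \in \{1/n : n \in \NN\}$ (these do not appear in $p$) yields
$$p\!\left(\tfrac{1}{n_1}, \ldots, \tfrac{1}{n_k}, x_{k+2}, \ldots, x_\ell\right) = (n_1 n_2 \cdots n_k)^{-D} \cdot q(n_1, \ldots, n_k),$$
where the prefactor is strictly positive because each $n_i \geq 1$.

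Therefore, when the $x_i$'s range over $\{1/n : n \in \NN\}$, the sign of $p$ matches that of $q$ on the corresponding integer tuple. Existence of $x_2, \ldots, x_\ell \in \{1/n : n \in \NN\}$ with $p < 0$ is thus equivalent to existence of $n_1, \ldots, n_k \in \NN$ with $q(n_1, \ldots, n_k) < 0$, so undecidability transfers directly from Matiyasevich's problem. I do not foresee a real obstacle: the reduction is an elementary substitution, and the only point of care is choosing the monomial prefactor large enough in each variable to clear denominators, which the maximum single-variable degree $D$ guarantees. The hypothesis $\ell \geq 7$ is not used in this lemma itself and presumably reserves enough coordinates for the downstream step that encodes $p$ via the necklace profile $\mathcal{D}_{\leq \ell}$ of Proposition~\ref{prop:specialgraph}.
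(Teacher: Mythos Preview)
Your proposal is correct and follows essentially the same route as the paper: start from Matiyasevich's undecidability over $\NN$, substitute $x_i = 1/y_i$, and clear denominators with a monomial prefactor. The only cosmetic differences are that the paper uses the total degree $\deg(\bar p)$ as the exponent (rather than your tighter max single-variable degree $D$) and uses all $\ell-1$ variables rather than padding with dummies, but the argument is the same.
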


\begin{proof}
Consider a polynomial $\bar{p}(y_2, \ldots, y_\ell)$ with integer coefficients. Note that $\bar{p}(y_2, \ldots, y_\ell) \geq 0$ for all $y_2, \ldots, y_\ell\in \NN$ if and only if the polynomial with integer coefficients $$p(x_2, \ldots, x_\ell):=\left(\prod_{q=2}^{\ell}x_q^{\textup{deg}(\bar{p})}\right)\bar{p}\left(\frac{1}{x_2}, \ldots, \frac{1}{x_\ell} \right)$$ is nonnegative for all $x_2, \ldots, x_\ell\in \{\frac{1}{n}: n\in \NN\}$. Hence, the problem is undecidable.
\end{proof}

Just as in \cite{HN11}, we need to relate the nonnegativity of a polynomial with variables in $\{\frac{1}{n} | n\in \NN\}$ to the nonnegativity of polynomials involving homomorphism densities. By Lemma \ref{lem:convex}, the convex hull $\mathcal{D}_q$ has the integrality feature for every $2\leq q \leq \ell$. 
To use this fact, we first need to create an auxiliary polynomial as in Lemma 5.4 in \cite{HN11} so that the original polynomial is nonnegative on $\{1/n: n\in \mathbb{N}\}$ if and only if the auxiliary polynomial is nonnegative on $\mathcal{R}^{\ell-1}$.
We modify slightly the construction of the auxiliary polynomial presented in \cite{HN11}, and reproduce the proof in the Appendix for completion.

\begin{lemma}\label{lem:pbar}
Let $p$ be a polynomial in variables $x_2, \ldots, x_\ell$. Let $M$ be the sum of the absolute values of the coefficients of $p$ multiplied by $100 \textup{deg}(p)$. Define $\bar{p}\in \RR[x_2, \ldots, x_\ell, y_2, \ldots, y_\ell]$ as $$\bar{p}:=p\prod_{q=2}^\ell x_q^6 + M \left(\sum_{q=2}^\ell y_q - x_q^2\right).$$ Then the following are equivalent:
\begin{enumerate}
\item $\bar{p}(x_2, \ldots, x_\ell, y_2, \ldots, y_\ell)<0$ for some $x_2, \ldots, x_\ell, y_2, \ldots, y_\ell$ with $(x_q, y_q)\in \mathcal{R}$ for every $2\leq q \leq \ell$;
\item $p(x_2, \ldots, x_\ell)<0$ for some $x_2, \ldots, x_\ell \in \{\frac{1}{n}:n\in \NN\}$. 
\end{enumerate}
\end{lemma}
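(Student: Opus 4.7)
For the easy direction $(2)\Rightarrow(1)$ I would plug in extreme points: if $p(1/n_2,\ldots,1/n_\ell)<0$ for some $(n_q)\in\NN^{\ell-1}$, then $(x_q,y_q)=(1/n_q,1/n_q^2)\in\mathcal R$ makes $y_q-x_q^2=0$ for each $q$, so $\bar p=p(1/n_2,\ldots,1/n_\ell)\prod_{q=2}^\ell (1/n_q)^6<0$.

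For $(1)\Rightarrow(2)$ I would argue contrapositively: assume $p(1/n_2,\ldots,1/n_\ell)\geq 0$ for every $(n_q)\in\NN^{\ell-1}$, and deduce that $\bar p\geq 0$ on $\mathcal R^{\ell-1}$. The plan is a probabilistic decomposition. Since $\mathcal R$ is the convex hull of $\{(0,0)\}\cup\{(1/n,1/n^2):n\in\NN\}$, each $(x_q,y_q)\in\mathcal R$ can be written as $(E[Z_q],E[Z_q^2])$ for some random variable $Z_q$ supported on $\{0\}\cup\{1/n:n\in\NN\}\subset[0,1]$. Choose $Z_2,\ldots,Z_\ell$ to be independent, so that $\mathrm{Var}(Z_q)=y_q-x_q^2$. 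The hypothesis on $p$ together with independence yields
\[
E\!\left[p(\mathbf Z)\prod_{q=2}^\ell Z_q^6\right]=\sum_{(n_q)\in\NN^{\ell-1}}\prod_q\Pr(Z_q=1/n_q)\cdot p(1/n_2,\ldots,1/n_\ell)\prod_q(1/n_q)^6\geq 0.
\]

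To conclude I would compare $p(\mathbf x)\prod_q x_q^6$ with this expectation. Expanding $p\prod_q Z_q^6=\sum_\alpha c_\alpha\prod_q Z_q^{\alpha_q+6}$ and using independence of the $Z_q$ reduces matters to controlling $E[Z_q^k]-x_q^k$ for $k\leq\deg(p)+6$; a second-order Taylor expansion with Lagrange remainder, using $Z_q\in[0,1]$, gives the single-variable estimate $0\leq E[Z_q^k]-x_q^k\leq\binom{k}{2}\mathrm{Var}(Z_q)$. Telescoping across the product $\prod_q E[Z_q^{\alpha_q+6}]-\prod_q x_q^{\alpha_q+6}$ (each factor in $[0,1]$) and then summing over $\alpha$ weighted by $|c_\alpha|$ produces an inequality of the form
\[
\Bigl|E\!\left[p(\mathbf Z)\prod_q Z_q^6\right]-p(\mathbf x)\prod_q x_q^6\Bigr|\leq C\sum_q(y_q-x_q^2),
\]
with $C$ a constant determined by $\deg(p)$ and $\sum_\alpha|c_\alpha|$. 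The constant $M=100\deg(p)\cdot\sum_\alpha|c_\alpha|$ in the definition of $\bar p$ is calibrated so that $M\geq C$, whereupon
\[
\bar p(\mathbf x,\mathbf y)=p(\mathbf x)\prod_q x_q^6+M\sum_q(y_q-x_q^2)\geq E\!\left[p(\mathbf Z)\prod_q Z_q^6\right]+(M-C)\sum_q(y_q-x_q^2)\geq 0,
\]
contradicting the existence of a point where $\bar p<0$.

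The main technical obstacle is the sharp bookkeeping of the Taylor remainder in the multivariate setting, so that the specific prescribed constant $M=100\deg(p)\sum|c_\alpha|$ indeed dominates the effective $C$ arising from the telescoping. This requires fully exploiting the $[0,1]$-range of each $Z_q$ together with the controlling factor $\prod_q x_q^6$ to keep the deviation of each monomial expectation $E[Z_q^{\alpha_q+6}]$ from $x_q^{\alpha_q+6}$ linear in $\mathrm{Var}(Z_q)$, and matching that estimate against the budget $M\sum_q(y_q-x_q^2)$.
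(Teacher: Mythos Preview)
Your direction $(2)\Rightarrow(1)$ matches the paper's exactly. For $(1)\Rightarrow(2)$ you take a genuinely different route: the paper argues directly, first pushing each $y_q$ down to the lower boundary $y_q=L(x_q)$ of $\mathcal R$, then minimizing the resulting function $\tilde p(x_2,\dots,x_\ell)$ and using the first-order condition $\partial\tilde p/\partial x_{q^*}=0$ to force each minimizing $x_{q^*}$ to a corner $1/r$. The calculus there compares $|\partial(p\prod x_q^6)/\partial x_{q^*}|\le 7\deg(p)\sum|c_i|\,x_{q^*}^5$ against $M|L'(x_{q^*})-2x_{q^*}|$, and then plays off $x_{q^*}^6\le 1/r^6$ against $L(x_{q^*})-x_{q^*}^2\gtrsim 1/r^4$; this is why a constant \emph{linear} in $\deg(p)$ suffices.

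Your convex-decomposition idea is attractive, but the step you flag as ``sharp bookkeeping'' is a real gap, not just bookkeeping. The Lagrange-remainder bound you invoke gives $0\le E[Z_q^{k}]-x_q^{k}\le\binom{k}{2}\mathrm{Var}(Z_q)$, so after telescoping and summing over monomials your effective constant is $C=\binom{\deg(p)+6}{2}\sum_\alpha|c_\alpha|$, which is \emph{quadratic} in $\deg(p)$. Since $M=100\deg(p)\sum_\alpha|c_\alpha|$ is only linear, $M\ge C$ fails once $\deg(p)\gtrsim 190$, and your final chain of inequalities breaks. The factor $\prod_q x_q^6$ is already absorbed into the exponents $\alpha_q+6$, so it does not by itself rescue the estimate. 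To make your route go through with the stated $M$, you would need to exploit the special support $\{0\}\cup\{1/n:n\in\NN\}$ (in particular the gap between $1$ and $1/2$, which tames the second derivative of $t\mapsto t^k$ away from $t=1$) to replace the $\binom{k}{2}$ by something linear in $k$; this is plausible but is an additional argument, not routine. The paper's boundary-plus-optimality method sidesteps this degree-dependence entirely.
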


\begin{lemma}
Given a polynomial $p$ in $\ell-1$ variables, there is a quantum graph $f(p)$ such that for any $W \in \mathcal{W}$, 
\begin{equation}
    t(f(p), W):= \bar{p}\left(\frac{t(N_{8,2},W)}{t(N_{4,2},W)^2}, \ldots, \frac{t(N_{8,\ell},W)}{t(N_{4,\ell},W)^2}, \frac{t(N_{12,2},W)}{t(N_{4,2},W)^3}, \ldots, \frac{t(N_{12,\ell},W)}{t(N_{4,\ell},W)^3}\right)\prod_{q=2}^\ell t(N_{4,q},W)^{3\textup{deg}(p)}. \label{eq:quantum}
\end{equation}
\end{lemma}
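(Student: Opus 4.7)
The plan is to construct $f(p)$ explicitly as an $\RR$-linear combination of finite graphs, each obtained as a disjoint union of necklaces, so that $t(\cdot, W)$ of this combination recovers the right-hand side of the displayed identity. The construction is essentially a dictionary between monomials of the rational function on the right and disjoint unions of the appropriate necklaces.

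First, I would expand $\bar{p}$ into its monomials using the definition $\bar{p} = p \prod_{q=2}^{\ell} x_q^6 + M \sum_{q=2}^{\ell}(y_q - x_q^2)$. A generic monomial has the form $c_\alpha \prod_{q=2}^{\ell} x_q^{a_q} y_q^{b_q}$ with $c_\alpha \in \RR$ and $a_q, b_q \in \ZZ_{\geq 0}$. Substituting $x_q = t(N_{8,q},W)/t(N_{4,q},W)^2$ and $y_q = t(N_{12,q},W)/t(N_{4,q},W)^3$ and multiplying by the prefactor $\prod_q t(N_{4,q},W)^{3\deg(p)}$, the monomial becomes
\[
c_\alpha \prod_{q=2}^{\ell} t(N_{4,q},W)^{3\deg(p) - 2a_q - 3b_q}\, t(N_{8,q},W)^{a_q}\, t(N_{12,q},W)^{b_q}.
\]

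Second, I would check that all exponents of $t(N_{4,q},W)$ are nonnegative integers. Since the monomials of $\bar{p}$ have $a_q \leq \deg(p) + 6$ (worst case attained in $p\prod x_q^6$) and $b_q \leq 1$, the minimal value of $3\deg(p) - 2a_q - 3b_q$ is $\deg(p) - 12$. One can therefore assume $\deg(p) \geq 12$ without loss of generality, by padding $p$ with an extra factor of $x_2^{12}$, which does not affect the sign behavior on $\{1/n : n \in \NN\}^{\ell-1}$ since this factor is strictly positive there; alternatively, one may simply enlarge the prefactor. Under this convention each exponent is a nonnegative integer, so the right-hand side is a finite $\RR$-linear combination of products of the form $\prod_q t(N_{4,q},W)^{\alpha_q} t(N_{8,q},W)^{\beta_q} t(N_{12,q},W)^{\gamma_q}$ with $\alpha_q, \beta_q, \gamma_q \in \ZZ_{\geq 0}$.

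Finally, I would apply the identity $t(H_1, W)\, t(H_2, W) = t(H_1 H_2, W)$ noted in the introduction, where $H_1 H_2$ denotes disjoint union. Each product above then equals $t(H_\alpha, W)$, where $H_\alpha$ is the disjoint union of the prescribed necklaces counted with multiplicity. Setting $f(p) := \sum_\alpha c_\alpha H_\alpha$ gives a quantum graph for which $t(f(p), W)$ equals the right-hand side by linearity of $t(\cdot, W)$. The only nontrivial point in the proof is the exponent bookkeeping ensuring the prefactor $\prod_q t(N_{4,q}, W)^{3 \deg(p)}$ clears all denominators; once this is secured, the construction is a direct algebraic translation from polynomials in density ratios to quantum graphs.
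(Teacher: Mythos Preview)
Your argument is correct and follows the paper's approach exactly: the paper's proof is the single-sentence observation that the right-hand side is a polynomial in the densities $t(N_{4,q},W)$, $t(N_{8,q},W)$, $t(N_{12,q},W)$, whence the disjoint-union identity $t(H_1,W)t(H_2,W)=t(H_1H_2,W)$ yields the desired quantum graph. You go further by actually checking that the prefactor $\prod_q t(N_{4,q},W)^{3\deg(p)}$ clears all denominators, correctly noting this needs $\deg(p)\ge 12$ (a point the paper glosses over); your padding fix is harmless for the downstream undecidability reduction.
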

\begin{proof}
The right hand side of (\ref{eq:quantum}) is a polynomial in $t(N_{4,q}, W), t(N_{8,q},W), t(N_{12,q},W)$ for $2\leq q \leq \ell$. Thus there is a quantum graph $f(p)$ where (\ref{eq:quantum}) holds.
\end{proof}

\begin{lemma}\label{lem:equiv}
The following two statements are equivalent. 
\begin{itemize}
    \item $p(x_2, \ldots, x_\ell)<0$ for some $x_2, \ldots, x_\ell \in \{\frac{1}{n}:n\in \NN\}$. 
    \item $t(f(p),W) < 0$ for some $W \in \mathcal{W}$. 
\end{itemize}
\end{lemma}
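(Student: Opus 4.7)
The plan is to use Lemma \ref{lem:pbar} as a bridge to a common intermediate statement about the sign of the auxiliary polynomial $\bar p$ on the product region $\mathcal{R}^{\ell-1}$. By Lemma \ref{lem:pbar}, condition (1) is equivalent to the existence of a point in $\mathcal{R}^{\ell-1}$ at which $\bar p < 0$, so it suffices to show that this intermediate statement is also equivalent to condition (2). Here Theorem \ref{thm:profile} does the work, identifying $\mathcal{R}^{\ell-1}$ as precisely the (closure of the) region of profile points $(x_q(W), y_q(W))_{q=2}^\ell$ achievable by kernels $W \in \mathcal{W}$.

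For the forward direction, suppose $p(1/n_2, \ldots, 1/n_\ell) < 0$ for some positive integers $n_2,\ldots,n_\ell$. Setting $x_q = 1/n_q$ and $y_q = 1/n_q^2 = x_q^2$ annihilates the term $M \sum_q (y_q - x_q^2)$ in $\bar p$, leaving $\bar p = p \cdot \prod_q x_q^6 < 0$ at this point. By the construction used to prove Proposition \ref{prop:specialgraph}, there is a sequence of kernels $W_K \in \mathcal{W}$ with $t(N_{4,q}, W_K) > 0$ for every $q$ and whose profile $(x_q(W_K), y_q(W_K))_{q=2}^\ell$ converges to $(1/n_q, 1/n_q^2)_{q=2}^\ell$ as $K \to \infty$. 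Since $\bar p$ is continuous, for $K$ sufficiently large $\bar p$ evaluated at the profile of $W_K$ is strictly negative; combined with the strict positivity of $\prod_q t(N_{4,q}, W_K)^{3\textup{deg}(p)}$, the identity (\ref{eq:quantum}) yields $t(f(p), W_K) < 0$.

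For the reverse direction, assume $t(f(p), W) < 0$ for some $W \in \mathcal{W}$. Since $t(N_{4,q}, W) = \sum_i \lambda_i^4 \ge 0$ by Lemma \ref{cor:relationhomeigenvalues}, the product $\prod_q t(N_{4,q}, W)^{3\textup{deg}(p)}$ is nonnegative; it must in fact be strictly positive, since otherwise (\ref{eq:quantum}) would force $t(f(p), W) = 0$. Hence each $t(N_{4,q}, W)$ is positive, the profile point $(x_q(W), y_q(W))_{q=2}^\ell$ is well-defined and lies in $\mathcal{R}^{\ell-1}$ by Theorem \ref{thm:profile}, and $\bar p$ must be strictly negative there. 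Lemma \ref{lem:pbar} then yields $x_2, \ldots, x_\ell \in \{1/n : n \in \NN\}$ with $p(x_2, \ldots, x_\ell) < 0$.

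The main obstacle is the forward direction, where one has to reconcile the strict inequality $t(f(p), W) < 0$ with the fact that Proposition \ref{prop:specialgraph} places the integer points $(1/n_q, 1/n_q^2)_{q=2}^\ell$ only in the closure $\mathcal{D}_{\leq \ell}$. This is handled by the continuity-and-perturbation argument above, exploiting that the Alon-type construction keeps $t(N_{4,q}, W_K)$ uniformly bounded away from $0$ along the approximating sequence, so that the product factor on the right-hand side of (\ref{eq:quantum}) cannot dilute the strict negativity coming from $\bar p$.
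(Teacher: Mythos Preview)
Your proof is correct and follows essentially the same two-direction argument as the paper, invoking Lemma~\ref{lem:pbar}, Proposition~\ref{prop:specialgraph}, and the containment of the profile in $\mathcal{R}^{\ell-1}$; if anything, you are more careful than the paper in explicitly ruling out the degenerate case $t(N_{4,q},W)=0$. One minor remark: your closing claim that $t(N_{4,q},W_K)$ stays ``uniformly bounded away from $0$'' along the Alon-type sequence is not accurate (these densities tend to $0$ as $K\to\infty$), but this is harmless, since the body of your argument only uses---correctly---strict positivity for each fixed $K$.
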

\begin{proof}
If $p(x_2, \ldots, x_\ell)\geq 0$ for all $x_2, \dots, x_\ell \in \{\frac{1}{n}: n\in \mathbb{N}\}^{\ell-1}$, then 
$\bar{p}(x_2, \ldots, x_\ell, y_2, \ldots, y_\ell)\geq 0$ for every $x_2, \ldots, x_l, y_2, \ldots, y_\ell$ such that $(x_q,y_q)\in \mathcal{R}$ for every $2\leq q \leq \ell$ by Lemma \ref{lem:pbar}. Therefore $t(f(p),W)\geq 0$ for every $W \in \mathcal{W}$ by (\ref{eq:quantum}) and Lemma \ref{lem:firstrealization}. 

On the other hand, if $p(x_2', \ldots, x_\ell') < 0$ where for each $2 \leq q \leq \ell$, we have that $x_q' =  \frac{1}{n_q}$ for some $n_q \in \mathbb{N}$, then by the construction of $\bar p$, $\bar{p}(\frac{1}{n_2}, \ldots, \frac{1}{n_\ell}, \frac{1}{n_2^2}, \ldots, \frac{1}{n_\ell^2})< 0$. By Proposition \ref{prop:specialgraph}, there is a sequence of $W_n$'s such that $\frac{t(N_{8,q},W_n)}{t(N_{4,q},W_n)^2} \rightarrow \frac{1}{n_q}$ and $\frac{t(N_{12,q},W_n)}{t(N_{4,q},W_n)^3}\rightarrow \frac{1}{n_q^2}$ for each $2 \leq q \leq \ell$ as $n\rightarrow \infty$. Thus again by (\ref{eq:quantum}) and the continuity of polynomial $\bar p$, there is a $W$ such that $t(f(p),W) < 0$.
\end{proof}

We can now finish the proof of Theorem \ref{thm:main}.
\begin{proof}[Proof of Theorem \ref{thm:main}]
This is a direct consequence of Lemma \ref{lem:equiv} and Theorem \ref{thm:Mat}. 
\end{proof}
Again, note that the same holds if we look at the version of this theorem where we replace graph homomorphisms by homomorphism numbers.

\begin{proof}[Proof of Corollary \ref{cor:rational}]
This follows from the same argument that Lov\'asz presented in \cite{Lovaszbook} for the positivstellensatz of graphs. Consider two Turing machines where the input is a quantum graph $f$ with rational coefficients. One machine is searching for sums of squares  $g, h$ where $gf =h$; the other machine is searching for graphs $G_{\ww}$ where the range of $\ww$ is $\mathbb{Z}$. Both are well-defined algorithms. If either machine halts, we know $f \geq 0$ or not. However, by our undecidability result Theorem \ref{thm:main}, there must be an $f$ where both machines never halt. 
\end{proof}

 \bibliographystyle{alpha}
\bibliography{undecidabilityreferences}

\appendix
\section{Proof of Lemma \ref{lem:pbar}}
\begin{proof}
If $(2)$ holds, then for each $x_q$ we have $(x_q, x_q^2)\in \mathcal{R}$ by the definition of $\mathcal{R}$, and setting $y_q:=x_q^2$ gives $\bar{p}(x_2, \ldots, x_\ell, y_2, \ldots, y_\ell)=p(x_2, \ldots, x_\ell)\prod_{q=2}^\ell x_q^6<0$. Therefore, $(2)$ implies $(1)$. 

Suppose now that $(1)$ holds. Observe that decreasing $y_q$ decreases the value of $\bar{p}$, and we can thus assume without loss of generality that $y_q$ is as small as possible while still having $(x_q, y_q)\in \mathcal{R}$, i.e., $y_q=L(x_q)$. So we have $$\tilde{p}(x_2, \ldots, x_\ell):=\bar{p}(x_2, \ldots, x_\ell, y_2, \ldots, y_\ell)=p(x_2, \ldots, x_\ell)\prod_{q=2}^\ell x_q^6+ M\left(\sum_{q=2}^\ell L(x_q) - x_q^2\right)<0.$$ For every $2 \leq q \leq \ell$, let $r_q$ be a positive integer such that $x_q \in \left[\frac{1}{r_q+1}, \frac{1}{r_q}\right]$. Fixing $r_2, \ldots, r_\ell$, we may assume that $x_2, \ldots, x_\ell$ are chosen in the corresponding intervals to minimize $\tilde{p}$. We claim that we then have that $x_q \in \{\frac{1}{n}:n\in \NN\}$ for every $2 \leq q \leq \ell$. Suppose not: suppose that $x_{q^*} \in (\frac{1}{r_{q^*}+1}, \frac{1}{r_{q^*}})$ for some $q^*$. By the choice of $x_{q^*}$, we have that $\frac{\partial \tilde{p}}{\partial x_{q^*}}=0$. We will derive a contradiction by a sequence of observations.

First note that since each $x_q\in (0,1]$ and $\textup{deg}(p)\geq 1$, we have that
\begin{align*}
 &\bigg|\frac{\partial}{\partial x_{q^*}}\left( p(x_2, \ldots, x_\ell) \prod_{q=2}^\ell x_q^6\right)\bigg| = p \cdot  6x_{q^*}^5\cdot \prod_{q\in [\ell]\backslash\{1,q^*\}} x_q^6 + \frac{\partial p}{\partial x_{q^*}} \cdot \prod_{q=2}^\ell x_q^6 
 \leq \sum_i |c_i| \cdot 6x_{q^*}^5 \cdot 1 + \textup{deg}(p)\sum_{i} |c_i|\cdot x_{q^*}^6\\
& \leq  7 \textup{deg}(p) \sum_{i} |c_i| x_{q^*}^5
=7 \frac{M}{100} x_{q^*}^5
\leq \frac{M}{12 r_{q^*}^5}
\end{align*}
where $\sum_i |c_i|$ is the sum of the absolute values of the coefficients of $p$.

Therefore, since $\frac{\partial \tilde{p}}{\partial x_{q^*}}=0$, we have
$\frac{1}{12r_{q^*}^5} \geq |L'(x_{q^*})-2x_{q^*}|=\bigg| 2z - 2x_{q^*}\bigg|$  
where $z=\frac{r_{q^*}+\frac{1}{2}}{r_{q^*}(r_{q^*}+1)}$. Then the previous inequality can be rewritten as $$|z-x_{q^*}| \leq \frac{1}{24r_{q^*}^5}.$$ Now note that $L'(x)-2x=\frac{2r+1}{r(r+1)}-2x$ is monotone on the interval $\left[\frac{1}{r_{q^*}+1},{\frac{1}{r_{q^*}}}\right]$ which contains both $x_{q^*}$ and $z$, and that this expression yields $0$ when $x=z$. 
It follows that 
\begin{align*}
L(x_{q^*})-x^2_{q^*} &\geq (L(z)-z^2)-|(L(x_{q^*})-x_{q^*}^2)'||z-x_{q^*}|\\
& \geq \frac{1}{4r_{q^*}^2(r_{q^*}+1)^2}-2\left(\frac{1}{24r_{q^*}^5}\right)^2
\geq \frac{1}{16r_{q^*}^4}-2\left(\frac{1}{24r_{q^*}^5}\right)^2=\frac{1}{18r_{q^*}^4}.
\end{align*}

Finally, note that $\sum_{q=2}^{\ell} L(x_q)-x_q^2 \geq L(x_{q^*})-x_{q^*}^2$ since $L(x_q)-x_q^2 \geq 0$ for every $q$. So, putting everything together, we thus have that
$$\tilde{p}(x_2, \ldots, x_\ell) \geq -\frac{M}{100} x_{q^*}^6+M(L(x_{q^*})-x_{q^*}^2) \geq M \left( \frac{1}{18r_{q^*}^4} - \frac{1}{100 r_{q^*}^6}\right) > 0,$$ which is a contradiction. Therefore, the claim that $x_q\in \{\frac{1}{n} : n\in \NN\}$ for every $2\leq q \leq \ell$ holds. We therefore have that 
$0>\tilde{p}(x_2, \ldots, x_{\ell}) = p(x_2, \ldots, x_\ell) \prod_{q=2}^\ell x_q^6,$ which shows that $(2)$ holds since $\prod_{q=2}^\ell x_q^6\geq 0$ . 
\end{proof}

\section{Alternative proof}
We provide an alternative proof to Theorem \ref{thm:main}. The advantage of this proof is that it is easier to obtain $k$ independent copies of the convex hull, although the description of the quantum graph is slightly more complicated. 

In some steps of this proof, it is easier to work with {\it induced} copies of $H$ in $G$. Given graphs $H$ and $G$, an {\it induced homomorphism} from $H$ to $G$ is a mapping $V(H) \to V(G)$ that preserves both adjacency and non-adjacency. Given a graph $H$ and $W \in \mathcal{W}$, we can define the {\it induced density} as
\[
\tind(H, W) := \int_{[0,1]^{|V(H)|}} \prod_{\{i,j\}\in E(H)} W(x_i, x_j)\prod_{\{i,j\}\notin E(H)} (1-W(x_i, x_j))d x_{1} \dots dx_{|V(H)|}.
\]
Thus if $W_G$ is the graphon associated with a simple graph $G$, $\tind(H, W_G)$ is essentially the density of induced copies of $H$ in $G$. We can similarly define conditional induced density. Suppose $H^{\bullet \dots \bullet}$ has $k$ roots, then 
$$t_{\textup{ind}, x_1,\dots, x_k}(H^{\bullet \dots \bullet}, W) := \int_{[0,1]^{|V(H)|-k}} \prod_{\{i,j\}\in E(H)} W(x_i, x_j) \prod_{\{i,j\}\notin E(H)} (1-W(x_i, x_j))  d x_{k+1} \dots dx_{|V(H)|}.$$

Let $\mathcal{H}$ be the set of connected rooted graphs $H^{\bullet \bullet}$ where (1) the two roots are adjacent, and (2) the graph is symmetric  with respect to  the two roots, i.e., there is an induced homomorphism $V(H) \to V(H)$ which maps the two roots to one another. Let $N_{c, H^{\bullet \bullet}}$ be the  graph obtained by gluing each edge $e$ of the cycle $C_c$  to an induced copy of $H$ by identifying the two roots of $H^{\bullet \bullet}$ with the end vertices of $e$. 
Let $\bN_{c, H^{\bullet \bullet}}$ be the set of all possible graphs that contain $N_{c, H^{\bullet \bullet}}$ as a spanning subgraph, and where there can be additional edges formed by any pair of vertices belonging to copies of $H$ that were glued to different edges of $C_c$.  To lighten the notation, we let  
\[
\tind(\bN_{c,H^{\bullet \bullet}}, W) := \sum_{F \in \bN_{c,H^{\bullet \bullet}}} \tind(F, W). 
\]
For example, if $H = K_2^{\bullet \bullet}$, then $N_{c, H^{\bullet \bullet}}$ is $C_c$, and $\bN_{c, H^{\bullet \bullet}}$ is all the graphs on $c$ vertices containing $C_c$ as a subgraph. Thus $\tind(\bN_{c, K_2^{\bullet \bullet}},W) = t(C_c, W)$. 

The first part of the proof is almost the same as in Subsection \ref{sec:profile}. 
We introduce the following definition to understand $\tind(\bN_{c,H^{\bullet \bullet}}, W)$.
\begin{definition}\label{def:MWq2}
Given $W \in \mathcal{W}$, consider a symmetric measurable function $M_{W,H^{\bullet \bullet}}: [0,1]^2 \to \RR$ such that $M_{W,H^{\bullet \bullet}}(x,y) = t_{\text{ind}, x,y}(H^{\bullet \bullet}, W)$.
\end{definition}
This definition is a generalization of Definition \ref{def:MWq}. Indeed, when $H^{\bullet \bullet} = K_q^{\bullet \bullet}$, $M_{W,H^{\bullet \bullet}}$ is the same as $M_{W,q}$ from Definition \ref{def:MWq}. 
The following lemma is immediate.
\begin{lemma}\label{lem:powersum2}
Let $W \in \mathcal{W}$. Then $\tind(\bN_{c, H^{\bullet \bullet}}, W)=\sum_{i} \lambda_i^c$ where $\lambda_1, \lambda_2, \ldots$ are the eigenvalues of $M_{W,H^{\bullet \bullet}}$. 
\end{lemma}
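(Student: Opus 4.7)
The plan is to reduce to Lemma \ref{cor:relationhomeigenvalues} by first establishing the identity
$$\tind(\bN_{c, H^{\bullet \bullet}}, W) \;=\; t(C_c, M_{W, H^{\bullet \bullet}}),$$
and then applying the standard spectral decomposition of a cycle density on a symmetric kernel.

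For the identity, I would unwind the definitions and use the telescoping $W + (1-W) = 1$ on every ``free'' pair. Label the vertices of $N_{c,H^{\bullet\bullet}}$ by the cycle vertices $x_1,\ldots,x_c$ (each shared as the common root of two consecutive $H$-copies) together with, for each $k \in \{1,\ldots,c\}$, the internal vertices $y_k$ of the $k$-th glued copy. By the construction of $\bN_{c,H^{\bullet\bullet}}$, every within-copy pair is forced (to be an edge if it is an edge of $H$, and to be a non-edge otherwise), while the only freedom in choosing $F \in \bN_{c,H^{\bullet\bullet}}$ lies in the inter-copy pairs. Summing $\tind(F,W)$ over all $F \in \bN_{c,H^{\bullet\bullet}}$ therefore contributes, for each inter-copy pair $\{u,v\}$, a factor $W(x_u,x_v) + (1-W(x_u,x_v)) = 1$ that cancels these pairs entirely.

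What remains is an integrand with only within-copy factors. Since distinct $H$-copies share only the cycle roots, this integrand factorizes across $k$; integrating out each $y_k$ yields $t_{\mathrm{ind},\,x_k,x_{k+1}}(H^{\bullet\bullet}, W) = M_{W,H^{\bullet\bullet}}(x_k,x_{k+1})$ by Definition \ref{def:MWq2} (indices taken mod $c$), and the surviving integral in $x_1,\ldots,x_c$ is exactly $t(C_c, M_{W,H^{\bullet\bullet}})$. The symmetry hypothesis on $H^{\bullet\bullet}$ (an induced automorphism swapping the two roots) guarantees that $M_{W,H^{\bullet\bullet}}$ is symmetric and hence a kernel in $\mathcal{W}$, so Lemma \ref{cor:relationhomeigenvalues} applied to $M_{W,H^{\bullet\bullet}}$ in place of $W$ gives $t(C_c, M_{W,H^{\bullet\bullet}}) = \sum_i \lambda_i^c$, completing the argument.

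The main work is the combinatorial bookkeeping in the first step: correctly classifying every pair of vertices in $V(N_{c,H^{\bullet\bullet}})$ as within-copy or inter-copy, and verifying that the $W + (1-W) = 1$ collapse applies to exactly the inter-copy pairs with no leftover dependencies. Once this classification is in hand, the factorization over the $c$ copies and the spectral identity for $t(C_c, \cdot)$ are routine.
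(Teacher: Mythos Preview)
Your proposal is correct and is precisely the argument the paper has in mind: the paper declares the lemma ``immediate'' because it is the direct analogue of Lemma~\ref{cor:relationhomeigenvalues}, with the key identity $\tind(\bN_{c,H^{\bullet\bullet}},W)=t(C_c,M_{W,H^{\bullet\bullet}})$ arising exactly from the $W+(1-W)=1$ collapse over the inter-copy pairs that you describe. Your careful classification of within-copy versus inter-copy pairs and the subsequent factorization over the $c$ copies is exactly the bookkeeping that justifies the word ``immediate.''
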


By exactly the same proof as in Lemma \ref{lem:Dq}, if we let $$\mathcal{D}_{H^{\bullet \bullet},q}:=\textup{cl}\left(\left \{ \left(\frac{\tind(\bN_{8,H^{\bullet \bullet}},W)}{\tind(\bN_{4,H^{\bullet \bullet}},W)^2}, \frac{\tind(\bN_{12,H^{\bullet \bullet}},W)}{\tind(\bN_{4,H^{\bullet \bullet}},W)^3}\right) \mid W \in \mathcal{W} \textup{ and }\tind(\bN_{4,H^{\bullet \bullet}},W)\neq 0 \right\}\right),$$  
then $\mathcal{D}_{H^{\bullet \bullet},q}\subseteq [0,1]^2$.  In addition, for any $(x,y)\in \mathcal{D}_{H^{\bullet \bullet},q}$, we have that $y\geq x^2$.  Furthermore, by Lemma \ref{lem:powersum2}, Lemma \ref{lem:convex} holds here too. Recall from Definition \ref{def:R} that $\mathcal{R}$ is the convex hull of the points $\{(\frac{1}{n}, \frac{1}{n^2}) \mid n \in \mathbb{N}\}$ to obtain the following lemma.

\begin{lemma}\label{lem:convex2}
We have that $\mathcal{D}_{H^{\bullet \bullet},q}\subseteq \mathcal{R}$.
\end{lemma}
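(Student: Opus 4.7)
The plan is to carry out an essentially verbatim adaptation of the proof of Claim~\ref{claim:convex}, which was the concluding step in the proof of Lemma~\ref{lem:convex}. The key structural input in that proof was Lemma~\ref{cor:relationhomeigenvalues}, which realizes $t(N_{c,q},W)$ as a power sum $\sum_i \lambda_i^c$ of the eigenvalues of $M_{W,q}$. We now have the exact analog in the induced setting, namely Lemma~\ref{lem:powersum2}, expressing $\tind(\bN_{c, H^{\bullet \bullet}}, W) = \sum_i \lambda_i^c$ where the $\lambda_i$ are eigenvalues of $M_{W, H^{\bullet \bullet}}$. So the entire chain of reasoning should go through with this substitution.

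Concretely, I would fix $W \in \mathcal{W}$ with $\tind(\bN_{4, H^{\bullet \bullet}}, W)\neq 0$, let $\lambda_1, \lambda_2, \ldots$ be the eigenvalues of $M_{W, H^{\bullet \bullet}}$, and set $x_i := \lambda_i^4$ and $\mathbf{x} := (x_1, x_2, \ldots)$. Then $\mathbf{x} \in \mathbf{X}$ (the space defined in the proof of Lemma~\ref{lem:convex}) since $x_i \geq 0$ and the $\lambda_i$ are square-summable of all orders. By Lemma~\ref{lem:powersum2} we have $p_j(\mathbf{x}) = \sum_i \lambda_i^{4j} = \tind(\bN_{4j, H^{\bullet \bullet}}, W)$ for $j \in \{1,2,3\}$, so the point recorded in the definition of $\mathcal{D}_{H^{\bullet\bullet},q}$ equals $\bigl(\tfrac{p_2(\mathbf{x})}{p_1(\mathbf{x})^2},\tfrac{p_3(\mathbf{x})}{p_1(\mathbf{x})^3}\bigr)$.

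The one point where the argument must differ from Claim~\ref{claim:convex} is the normalization step. In the original proof one exploited the homogeneity identity $t(N_{4,q},\alpha W) = \alpha^{4\binom{q}{2}} t(N_{4,q}, W)$ to rescale $W$ so that $t(N_{4,q},W)=1$. Here $\tind$ is manifestly not homogeneous in $W$ since it mixes $W$ with $1-W$, so rescaling the kernel is unavailable. Fortunately we do not need it: the ratios $\frac{p_2(\mathbf{x})}{p_1(\mathbf{x})^2}$ and $\frac{p_3(\mathbf{x})}{p_1(\mathbf{x})^3}$ are invariant under the scaling $\mathbf{x} \to c\mathbf{x}$ for $c > 0$, and $p_1(\mathbf{x}) = \sum_i \lambda_i^4 > 0$ under our assumption. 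So I would replace $\mathbf{x}$ by $\tilde{\mathbf{x}} := \mathbf{x}/p_1(\mathbf{x}) \in \mathbf{X}$, which satisfies $p_1(\tilde{\mathbf{x}}) = 1$, and then the point of interest equals $(p_2(\tilde{\mathbf{x}}), p_3(\tilde{\mathbf{x}}))$.

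At this stage I would invoke Claim~\ref{claim:2} from the proof of Lemma~\ref{lem:convex} directly: it already shows that the convex hull of $\{(p_2(\tilde{\mathbf{x}}), p_3(\tilde{\mathbf{x}})) : \tilde{\mathbf{x}} \in \mathbf{X},\ p_1(\tilde{\mathbf{x}}) = 1\}$ equals the convex hull of $\{(1/m, 1/m^2) : m \in \mathbb{N}\}$, which is exactly $\mathcal{R}$ by Definition~\ref{def:R}. Hence each point in the pre-closure version of $\mathcal{D}_{H^{\bullet \bullet},q}$ lies in $\mathcal{R}$, and since $\mathcal{R}$ is compact (in particular closed), the closure is also contained in $\mathcal{R}$. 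I do not anticipate a genuine obstacle: the only modification relative to Claim~\ref{claim:convex} is replacing the kernel-scaling step with an eigenvalue-sequence-scaling step, which is immediate from the homogeneity of the power sums $p_j$.
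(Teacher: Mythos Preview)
Your proposal is correct and follows exactly the route the paper indicates: it simply says that, by Lemma~\ref{lem:powersum2}, the proof of Lemma~\ref{lem:convex} (in particular Claim~\ref{claim:convex}) carries over verbatim. Your observation that the kernel-rescaling normalization in Claim~\ref{claim:convex} is unavailable for $\tind$ and must be replaced by rescaling the eigenvalue sequence $\mathbf{x}$ is a legitimate refinement that the paper glosses over, but it does not change the approach.
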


The second part of the proof serves the purpose of Subsection \ref{subsec:k}. 
Given $k$  different $H^{\bullet \bullet}$,
we now proceed to show that we can obtain $k$ independent copies of the convex hull. 
In particular, we prove the following result, which serves the purpose of Proposition \ref{prop:specialgraph}. This proof is simpler than the proof of  Proposition \ref{prop:specialgraph}, which is the main result in Subsection \ref{subsec:k}. 

\begin{proposition}\label{prop:specialgraph2}
Let $H_1^{\bullet \bullet}, \dots, H_\ell^{\bullet \bullet}$ be $\ell$ different graphs in $\mathcal{H}$ with no induced homomorphism from one to another. Furthermore, assume that each $H_i$ has a vertex cut set of size at least three and contains a triangle, and that there is no induced homomorphism  $V(H_i)\to V(H_i)$ which  does not fix the set of the two roots. 
Let \[ x_i(W) =\frac{\tind(\bN_{8,H_i^{\bullet \bullet}},W)}{\tind(\bN_{4,H_i^{\bullet \bullet}},W)^2}, \ \  y_i(W) = \frac{\tind(\bN_{12,H_i^{\bullet \bullet}},W)}{\tind(\bN_{4,H_i^{\bullet \bullet}},W)^3}, \ \ \text{and} \] 

\begin{align*}\mathcal{D}'_{\leq \ell}:=\textup{cl}\left(\left\{ \left( x_1(W), y_1(W), x_2(W), y_2(W), \dots, x_\ell(W), y_\ell(W)
\right): 
W \in \mathcal{W},   t(N_{4,H_i^{\bullet \bullet}},W)\neq 0 \ \forall 1 \leq i \leq \ell \right\}\right).
 \end{align*} 
Then for any set of positive integers $r_1, \ldots, r_\ell$, $(\frac{1}{r_1}, \frac{1}{r_1^2},\frac{1}{r_2}, \frac{1}{r_2^2}, \ldots, \frac{1}{r_\ell}, \frac{1}{r_\ell^2})\in \mathcal{D}'_{\leq \ell}$. 
\end{proposition}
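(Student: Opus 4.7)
The plan is to mimic the proof of Proposition~\ref{prop:specialgraph}, replacing the clique-based blocks with blocks built from each rooted graph $H_i^{\bullet\bullet}$. For each $i$, form a graph $B_i$ by starting with an Alon $(n,d,\lambda)$-graph $A(k_i, 2)$ and gluing a copy of $H_i$ onto every edge, identifying the two roots of $H_i^{\bullet\bullet}$ with the endpoints of that edge. Since distinct edges of $A(k_i, 2)$ share at most one endpoint, distinct glued copies share at most one original vertex and no new vertices. I then take $W$ to be the graphon corresponding to the disjoint union of $r_i$ copies of $B_i$ for $i = 1, \ldots, \ell$, with $k_i = (\ell - i + 1) K$ for large $K$, so that the spectra of different $B_i$-blocks sit at very different scales and do not interfere with one another when computing the power sums given by Lemma~\ref{lem:powersum2}.

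The central structural claim, which I need to prove, is that the only induced copies of $H_i^{\bullet\bullet}$ in $B_i$ are the glued ones (with roots sent to the two endpoints of the underlying edge of $A(k_i, 2)$), and that $B_i$ contains no induced copy of $H_j^{\bullet\bullet}$ for $j \neq i$. I plan to use the four hypotheses as follows. Interpreting ``vertex cut set of size at least three'' as $3$-connectivity of each $H_m$, an induced copy $S$ of any $H_m$ in $B_i$ cannot straddle multiple glued copies: if $S \not\subseteq V(C)$ for some glued copy $C$ while $S$ contains a new vertex of $C$, then removing the at most two roots of $C$ lying in $S$ disconnects $S \cap V(C)$ from $S \setminus V(C)$, contradicting $3$-connectivity. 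Hence $S$ lies inside a single glued $H_i$; the triangle-freeness of $A(k_i,2)$ together with each $H_m$ containing a triangle makes this conclusion robust against degenerate embeddings using only original vertices. The no-induced-homomorphism-between-distinct-$H$'s hypothesis then forbids $S \cong H_j$ when $j \neq i$, and the absence of non-trivial root-fixing induced endomorphisms of $H_i$ ensures that when $j = i$, $S$ coincides with the glued copy. Consequently $M_{B_i, H_i^{\bullet\bullet}}$ is a positive constant (depending only on $H_i$) times the indicator kernel of the edges of $A(k_i, 2)$ supported on the original-vertex intervals of $B_i$, while $M_{B_i, H_j^{\bullet\bullet}} \equiv 0$ for $j \neq i$.

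Given the structural claim, $M_{W, H_i^{\bullet\bullet}}$ is block-diagonal with exactly $r_i$ nonzero blocks, each a rescaled copy of the adjacency operator of $A(k_i, 2)$, whose top eigenvalue dominates all others by a polynomial factor in $2^{k_i}$ by the Alon construction. Applying Lemma~\ref{lem:powersum2} and repeating the spectral-concentration computation from the proof of Proposition~\ref{prop:specialgraph} yields $x_i(W) = (1+o(1))/r_i$ and $y_i(W) = (1+o(1))/r_i^2$ as $K \to \infty$, simultaneously for all $i = 1, \ldots, \ell$. Taking the closure places $(1/r_1, 1/r_1^2, \ldots, 1/r_\ell, 1/r_\ell^2)$ in $\mathcal{D}'_{\leq \ell}$, as required. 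The main obstacle is the structural claim, specifically the straddling argument, which is exactly where the $3$-connectivity hypothesis does its essential work: distinct glued copies meet in at most one original vertex, so any straddling induced subgraph has a cut of size at most two, which is precisely the cut size that $3$-connectivity rules out.
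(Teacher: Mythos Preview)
Your proposal is correct and follows the same approach as the paper's proof: build $B_i$ by gluing $H_i$ along the edges of an Alon graph, take the disjoint union of $r_i$ copies of each $B_i$, use the structural hypotheses to show that $M_{W,H_i^{\bullet\bullet}}$ is supported exactly on the edges of the underlying Alon graphs inside the $B_i$-blocks, and then invoke Lemma~\ref{lem:powersum2} and the spectral gap of the Alon graph. Your straddling/cut argument for the structural claim is the same as the paper's, spelled out in a bit more detail.

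One simplification you should make: the scale separation $k_i = (\ell-i+1)K$, carried over from Proposition~\ref{prop:specialgraph}, is unnecessary here. By your own structural claim, $M_{B_j, H_i^{\bullet\bullet}} \equiv 0$ for every $j \neq i$, so the $B_j$-blocks contribute \emph{nothing} to the spectrum used to compute $x_i(W)$ and $y_i(W)$; the paper accordingly takes a single parameter $k$ for all $i$ and lets $k\to\infty$. Scale separation was essential in Proposition~\ref{prop:specialgraph} precisely because $M_{A(k_s,s),q}$ is nonzero for all $s \ge q$, so larger blocks contaminate the $q$-computation; the hypotheses on the $H_i$ in the present proposition are designed to eliminate that cross-contribution outright, and that is exactly what makes this alternative proof simpler than the main one.
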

\begin{proof}
For a positive integer $k$, let $A(k)$ be the following graph constructed by Alon \cite{Alon}:
$A(k)$ is a triangle-free graph with $n=\Theta(2^{3k})$ vertices that is $d = \Theta(2^{2k})$-regular and for which the second largest eigenvalue (in absolute value) is $\Theta(2^k)$. 
For each $1 \leq i \leq \ell$, let $G_i(k)$ be the graph obtained by replacing each edge of $A(k)$ by a copy of $H_i^{\bullet\bullet}$, identifying the two roots to the end vertices of the edge. For simplicity, write $G_i(k), A(k)$ as $G_i, A$ respectively. 

Let $G$ be the disjoint union of $r_i$ copies of $G_i$ for  $1 \leq i \leq \ell$. Let $N$ be the number of vertices in $G$. 
We now study $\tind(\bN_{c, H_i^{\bullet\bullet}}, G)$. Fix $i$ so that, by definition, we are only looking for induced copies of $H_i$. Note that since $H_i$ contains a triangle and $A$ is triangle-free, there is no copy of $H_i$ in $A$. Since $H_i$ is connected and has a vertex cut set of size at least three, each induced copy of $H_i$ in $G$ has to come from one single copy of $H_j$ for some $j$ which is glued to some original edge in $A$. Since there is no induced copy of $H_i$ in $H_j$ for $j\neq i$,  the only induced copies of $H_i$ has to come from $G_i$ and not  $G_j$ for $j \neq i$.  Furthermore, since all induced homomorphisms from $H_i$ to itself fix the set of roots, only the original edges in $A \subset G_i$ could yield an induced copy of $H_i^{\bullet \bullet}$ while identifying the end vertices of the edge to the roots (exactly one copy). Thus, by definition, $M_{G, H_i^{\bullet \bullet}}$ is always zero unless $x,y \in [0,1]^2$ correspond to two end vertices of an edge in $A \subset G_i$, in which case $M_{G, H_i^{\bullet \bullet}}(x,y)=\frac{1}{N^{|V(H_i)|-2}}$. Let $\lambda_1 \geq \lambda_2 \dots$ be the eigenvalues of $M_{G, H_i^{\bullet \bullet}}$. Note that they are precisely the eigenvalues of $A$, each duplicated $r_i$ times while scaled by the same non-zero scalar. Thus if $|\mu_1| > |\mu_2| \geq  \dots$ are the eigenvalues of the adjacency matrix of graph $A$, then by Lemma \ref{lem:powersum2} and the properties of $A(k)$, we have \[x_i(W_{G}) = \frac{r_i\sum_j \mu_j^8}{(r_i\sum_j  \mu_j^4)^2}
=  \frac{1}{r_i}\frac{(d^8 +  n O(d^{1/2})^8)}{((d^4 +  n O(d^{1/2})^4))^2} =  \frac{1}{r_i}\frac{d^8 +  \Theta(d^{1.5}) O(d^{1/2})^8}{(d^4 + \Theta(d^{1.5}) O(d^{1/2})^4)^2} \to  \frac{1}{r_i},
\]
where we use $d = \Theta(2^{2k})$ and $k \to \infty$. Similarly, we have $y_i(W_G) \to 1/r_i^2$ as $k \to \infty$
\end{proof}

There are many ways to construct graphs $H_i$'s satisfying the assumptions in Proposition \ref{prop:specialgraph2}. For example, one can build each $H_i$ from a Cayley graph, fixing two vertices as roots, and then changing some of its non-neighbors to neighbors. As long as the degrees of these two roots are larger than the degrees of the other vertices, there will be no induced homomorphism from $H_i$ to itself that does not fix the set of roots. Similarly, if $H_i$ and $H_j$ have different degree sequences while $|V(H_i)| = |V(H_j)|$,  then there is no induced homomorphism from one to another. 

We can now complete the proof of Theorem \ref{thm:main} using the same argument as in Subsection \ref{subsec:complete}. Given a polynomial $p$ in variables $x_1, \ldots, x_\ell$, let $\bar p$ be defined in the same way as in Lemma \ref{lem:pbar}. First note that by the definition of $\tind$, we have that for each graph $H$, there is a quantum graph $g$ such that for any $W \in \mathcal{W}$, $\tind(H, W) = t(g, W)$. Therefore there is a quantum graph $f(p)$ such that
\begin{align}
    t(f(p), W):= \bar{p}\left(\frac{\tind(\bN_{8,H_1^{\bullet \bullet}},W)}{\tind(\bN_{4,H_1^{\bullet \bullet}},W)^2}, \ldots, \frac{\tind(\bN_{8,H_\ell^{\bullet \bullet}},W)}{\tind(\bN_{4,H_\ell^{\bullet \bullet}},W)^2}, \frac{\tind(\bN_{12,H_1^{\bullet \bullet}},W)}{\tind(\bN_{4,H_1^{\bullet \bullet}},W)^3}, \ldots, \frac{\tind(\bN_{12,H_\ell^{\bullet \bullet}},W)}{\tind(\bN_{4,H_\ell^{\bullet \bullet}},W)^3}\right)  \nonumber
    \\ \prod_{i=1}^\ell \tind(\bN_{4,H_\ell^{\bullet \bullet}},W)^{3\textup{deg}(p)}. \label{eq:quantum2}
\end{align}
Then the rest of the proof follows from the same argument as in Subsection \ref{subsec:complete}. 

\end{document}